\newtheorem{theorem}{Theorem}[section]
\newtheorem{lemma}[theorem]{Lemma}
\newtheorem{de}[theorem]{Definition}
\newtheorem{pr}[theorem]{Proposition}
\newtheorem{qu}[theorem]{Question}
\newtheorem{co}[theorem]{Corollary}
\def \from{\leftarrow}
\def \k{\kappa}
\def \~{\, \tilde {}\,}
\def \l{\lambda}
\def \w{\omega}
\def\0{\emptyset}
\def\ms{\setminus}
\def\drop[1]{ }
\def\cl{\overline}
\def\Int{\mathrm{Int}}
\def\Toq{\Rightarrow \kern-12pt^?}
\def\prc{\prec\kern-8.5pt\circ\ }
\def\cl{\overline}
\def\Int{\mathrm{Int}}
\def\Toq{\Rightarrow \kern-12pt^?}
\def\prc{\prec\kern-8.5pt\circ\ }
\def \clo{\mathrm{cl}}
\def\conv{\mathrm{conv}}
\begin{document}

\title{Monotone Covering Properties 
defined by Closure-Preserving Operators}

\author{Strashimir G. Popvassilev}
\author{John E. Porter}

\address[Strashimir G. Popvassilev]{ 
The City College and Medgar Evers College, CUNY, New York, NY, United States (part time), and
Institute of Mathematics, Bulgarian Academy of Sciences, Sofia, Bulgaria (on leave)
}

\address[John E. Porter]{ Department of Mathematics and Statistics, Faculty Hall 6C,
 Murray State University, Murray, KY 42071-3341, United States}

\begin{abstract}
We continue Gartside, Moody, and Stares' study of versions of
monotone paracompactness.
We show that the class of spaces with a monotone
closure-preserving open operator is strictly larger
than those with a monotone open locally-finite operator.
We prove that monotonically metacompact GO-spaces
have a monotone open locally-finite operator, and
so do GO-spaces with a monotone (open or not)
closure-preserving operator, whose underlying LOTS
has a $\sigma$-closed-discrete dense subset.
A GO-space with a $\sigma$-closed-discrete
dense subset and a monotone closure-preserving
operator is metrizable. A compact LOTS with a monotone open closure-preserving operator is metrizable.
\end{abstract}

\maketitle

\vskip 0.2in

\begin{center}
{\em Dedicated to our mentor and dear friend Gary Gruenhage with gratitude
and well-wishes on the occasion of his 70-th birthday. 
}
\end{center}

\section{Introduction}

Gartside and Moody \cite[Theorem 1]{GM} proved that
a space is protometrizable
if and only if the space has a monotone star-refinement
operator\footnote{A monotone star-refinement operator is a
function $r:\mathcal{C}\to \mathcal{C}$ (where $\mathcal{C}$
is the set of all open covers of $X$) such that
(1) for every $\mathcal{U}\in \mathcal{C}$,
$r(\mathcal{U})$ star-refines $\mathcal{U}$, and
(2) if $\mathcal{U},\mathcal{V}\in \mathcal{C}$ and
$\mathcal{U}$ refines $\mathcal{V}$,
then
$r(\mathcal{U})$ refines $r(\mathcal{V})$.},
and asked whether the class of protometrizable spaces coincided
with the class of spaces with a monotone open locally-finite
operator.

\begin{de}\cite{GM} A monotone open locally-finite operator is a
function $r:\mathcal{C}\to \mathcal{C}$, where $\mathcal{C}$
is the set of all open covers of $X$, such that
(1) for every $\mathcal{U}\in \mathcal{C}$,
$r(\mathcal{U})$ is a locally-finite open refinement of $\mathcal{U}$,
and (2) if $\mathcal{U},\mathcal{V}\in \mathcal{C}$ and
$\mathcal{U}$ refines $\mathcal{V}$,
then
$r(\mathcal{U})$ refines $r(\mathcal{V})$.
\end{de}

In \cite{S}, Stares showed that different characterizations
of paracompact spaces, when monotonized, may
give rise to different classes of spaces, and asked
which monotonized characterizations coincide.
The authors showed in \cite{PP} that the class of spaces
with a monotone open locally-finite operator is strictly
larger than the class of protometrizable spaces.

It is well-known (E. Michael, \cite[5.1.G]{E})
that if every open cover of a regular T$_1$
space $X$ has a closure preserving refinement
(of arbitrary sets), then $X$ is paracompact,
i.e. every open cover has an {\em open}
locally-finite (and hence {\em open} closure-preserving)
refinement. Recall that a family $\mathcal F$ of subsets of
a space $X$ is called {\em closure-preserving}
if $\overline{\cup \mathcal H} =
\cup \{\overline{H} : H \in \mathcal H\}$
for every subfamily
$\mathcal H \subseteq \mathcal F$.
Extending 
Gartside, Moody, and Stares' study, 
we explore spaces with a monotone closure-preserving operator.

\begin{de}
A space $X$ is said to have a monotone
closure-preserving operator $r$
if for every open cover $\mathcal U$
we have that $r(\mathcal U)$ is
a closure-preserving cover (of arbitrary sets)
that refines $\mathcal U$, and
if $\mathcal U,\mathcal V$ are open covers such that
$\mathcal U$ refines $\mathcal V$, then
$r(\mathcal U)$ refines $r(\mathcal V)$.
If, in addition, each $U\in r(\mathcal U)$
is required to be
open then $r$ is called a monotone open
closure-preserving operator for $X$.
\end{de}

In a similar manner, we can distinguish between
monotone (not necessarily open) locally-finite
operators and monotone open locally-finite operators.
We do not know if the existence of
such monotone operators
necessarily implies the existence of open ones.
We provide partial answers to the following two questions in
Theorem~\ref{listmore} and Theorem~\ref{GOopen}.

\begin{qu}\label{opencp}
(a) If a space $X$ has a monotone closure-preserving operator,
must it have a monotone closure-preserving open operator?
(b) What if $X$ is a GO-space, or (c) if $X$ is a compact LOTS?
\end{qu}

\begin{qu}\label{openlf}
If a space has a monotone
(not necessarily open) locally-finite operator,
must it have a monotone open locally-finite operator?

\end{qu}

The following proposition shows that
spaces with a monotone open closure-preserving operator form
a broader class than those with
a monotone open locally-finite operator.

\begin{theorem}
Any space with only one non-isolated point has a
monotone open closure-preserving operator.
\end{theorem}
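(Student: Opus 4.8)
The plan is to write down a completely canonical operator, using no choice at all, and to exploit the fact that in such a space the only possible limit point is the single non-isolated point. Denote that point by $p$, so every $x \in X \setminus \{p\}$ is isolated. Given an open cover $\mathcal U$, I would set $N(\mathcal U) = \{U \in \mathcal U : p \in U\}$ and $W = \bigcup N(\mathcal U)$, and define
$$r(\mathcal U) = N(\mathcal U) \cup \{\{x\} : x \in X \setminus W\}.$$
Since $\mathcal U$ covers $X$, some member contains $p$, so $N(\mathcal U) \neq \emptyset$ and $p \in W$; points of $W$ are covered by $N(\mathcal U)$ and points outside $W$ by their singletons, so $r(\mathcal U)$ is a cover. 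Each member of $N(\mathcal U)$ lies in $\mathcal U$, and each singleton $\{x\}$ is contained in some member of $\mathcal U$; hence $r(\mathcal U)$ is an open cover refining $\mathcal U$.

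The key observation I would isolate as a preliminary step is that every open set containing $p$ is clopen. Indeed, for any $A \subseteq X$ and any isolated $y \notin A$, the neighborhood $\{y\}$ misses $A$, so $y \notin \overline A$; thus $\overline A \subseteq A \cup \{p\}$ for every $A$. In particular, if $U$ is open with $p \in U$, then $\overline U \subseteq U \cup \{p\} = U$, so $U$ is closed. Consequently any union of open neighborhoods of $p$ is again an open neighborhood of $p$, hence clopen, while any $S \subseteq X \setminus W$ satisfies $\overline S = S$, since $W$ is an open neighborhood of $p$ disjoint from $S$ (so $p \notin \overline S$) and no isolated point outside $S$ lies in $\overline S$.

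With this in hand, closure-preservation of $r(\mathcal U)$ is the substantive check. Given a subfamily $\mathcal H$, I would split it as $\mathcal H_1 \cup \mathcal H_2$, where $\mathcal H_1 \subseteq N(\mathcal U)$ consists of neighborhoods of $p$ and $\mathcal H_2$ consists of singletons $\{x\}$ with $x \in X \setminus W$. Writing $S = \bigcup \mathcal H_2 \subseteq X \setminus W$, the set $\bigcup \mathcal H_1$ is clopen and $\overline S = S$, so $\overline{\bigcup \mathcal H} = \bigcup \mathcal H_1 \cup S = \bigcup \mathcal H$, which coincides with $\bigcup_{H \in \mathcal H} \overline H$ because $\overline U = U$ for $U \in \mathcal H_1$ and $\overline{\{x\}} = \{x\}$ for the singletons. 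Hence $r(\mathcal U)$ is closure-preserving.

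It remains to verify monotonicity, and this is where the apparent obstacle lies: the naive strategy of selecting a single neighborhood of $p$ from each cover founders, since a monotone such selection need not exist. Keeping all of $N(\mathcal U)$ sidesteps this entirely. If $\mathcal U$ refines $\mathcal V$, then a singleton $\{x\} \in r(\mathcal U)$ is contained in some member of the cover $r(\mathcal V)$ automatically, while a neighborhood $U \in N(\mathcal U)$ satisfies $U \subseteq V$ for some $V \in \mathcal V$, and since $p \in U \subseteq V$ we get $V \in N(\mathcal V) \subseteq r(\mathcal V)$; thus $r(\mathcal U)$ refines $r(\mathcal V)$. The only real content is the clopen observation of the second step; once that is in place, every required property falls out directly from the canonical form of $r$.
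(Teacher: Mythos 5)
Your proposal is correct and uses exactly the operator from the paper's proof: $r(\mathcal U)=\mathcal U_p\cup\{\{x\}:x\in X\setminus\bigcup\mathcal U_p\}$, where $\mathcal U_p$ is the subfamily of members containing the non-isolated point $p$. The paper leaves the verification as "easy to check," and your write-up supplies precisely those details (the clopen observation, closure-preservation via the split into neighborhoods of $p$ and singletons, and monotonicity), all correctly.
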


\begin{proof}
Suppose $p$ is the only non-isolated point of a topological space $X$.  If $\mathcal{U}$ is an open cover of $X$,
let $\mathcal U_p=\{U\in\mathcal{U}:p\in U\}$.
It is easy to check that
$r(\mathcal{U})=\mathcal U_p
\cup \{\{x\}:x\in X\setminus \bigcup
\mathcal U_p\}$ is the required monotone open closure-preserving operator.
\end{proof}

\begin{de} \cite{BHL,Pmc}
A space is monotonically
(countably) metacompact if there is a function
$r$ that assigns to each (countable) open cover
$\mathcal{U}$ of $X$
a point-finite open refinement $r(\mathcal{U})$ covering $X$
such that if $\mathcal{V}$ is a (countable) open cover of
$X$ and $\mathcal{U}$ refines $\mathcal{V}$,
then $r(\mathcal{U})$ refines $r(\mathcal{V})$.
\end{de}

Clearly a monotone open locally-finite operator is both
closure-preserving and point-finite.
The one-point compactification 
of an uncountable discrete set of size $\k\ge\omega_1$ and
the Sequential Fan are examples of spaces that
have a monotone open closure-preserving
operator but no monotone locally-finite open operator
nor point-finite open operator
(see \cite[Theorem 2.4]{CG} and \cite[Theorem 3.2]{CG2},
respectively). This shows the metrization theorems
of Chase and Gruenhage \cite{CG,CG2} on compact or separable
monotonically metacompact spaces do not extend to
spaces with monotone open closure-preserving operators.

However, GO-spaces with a monotone closure-preserving
operator do behave similarly to monotonically metacompact GO-spaces.
By modifying results in \cite{BHL} and \cite{PL},
in Section \ref{lambdadense} we show that GO-spaces with a
monotone closure-preserving operator are monotonically metacompact
when the underlying LOTS 
has a $\sigma$-closed-discrete dense subset.
Moreover, monotonically metacompact
GO-spaces have a monotone open locally-finite operator.

Section \ref{esep} is devoted to metrization results. GO-spaces with a $\sigma$-closed-discrete dense
subset and a monotone closure-preserving operator are metrizable,
while every compact LOTS
with a monotone open closure-preserving operator is metrizable. These results should be compared to those
on monotonically (countably) metacompact spaces in \cite{BHL} and \cite{PL}.

\section{Monotone Operators in GO-Spaces}\label{lambdadense}

For any GO-space $(X, \tau, <)$, we use following notation found in \cite{BHL}:

\begin{itemize}
\item[$I_\tau$] $=\{x\in X:\{x\}\in \tau\}$;
\item[$R_\tau$] $=\{x\in X\ms I_\tau: [x,\rightarrow )\in \tau\}$;
\item[$L_\tau$] $=\{x\in X\ms I_\tau: (\leftarrow ,x]\in \tau\}$;
\item[$E_\tau$] $=  X\setminus(I_\tau\cup R_\tau\cup L_\tau)=\{x\in X:$ neither $[x,\to)$ nor $(\from,x]$ is open$\}$ .
\end{itemize}

For a non-empty subset $A\subseteq X$, let $l_A=\inf(A)$ and $u_A=\sup(A)$ which may be gaps in $X$, and define
$[A]=[l_A,u_A] = \{x\in X:l_A\le x\le u_A\}$.
Let $\conv(A)$ denote the convex hull of $A$,
that is, $\conv(A)=\cup\{[p,q]:p,q\in A,p\le q\}$. Clearly $\conv(A)$ is one of the sets
$[l_A,u_A]$, $[l_A,u_A)$, $(l_A,u_A]$, or $(l_A,u_A)$, depending
on which of $l_A$ and $u_A$ belong to $A$. It is easily seen that
if $A$ is open, then so is $\conv(A)$.

It is well-known that metacompact GO-spaces (and more generally
metacompact collectionwise normal spaces) are paracompact \cite[Theorem 5.3.3]{E}.
A monotone version of this result holds for  GO-spaces, and
partially answers our Question~2.6(b) in \cite{PP}.

\begin{theorem}\label{metaparaGO}
Suppose that $(X,\tau,<)$ is a GO-space.
If $X$ is monotonically metacompact
then it has a monotone open locally-finite operator.
\end{theorem}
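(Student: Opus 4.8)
The plan is to use the given monotone metacompact operator $r$ only as an organizing device, and to build the desired operator $s$ so that $s(\mathcal U)$ refines $\mathcal U$ \emph{directly} (not $r(\mathcal U)$); this extra freedom is what makes the upgrade possible, since near an accumulation point one is allowed to fall back on a single large member of $\mathcal U$. First I would normalize: given an open cover $\mathcal U$, replace $r(\mathcal U)$ by the family $\mathcal V(\mathcal U)$ of \emph{convex components} of its members. Because a convex component of a set contained in $B$ lies in a single convex component of $B$, convexification sends refinements to refinements; hence $\mathcal V(\mathcal U)$ is again a point-finite open refinement of $\mathcal U$, and $\mathcal U$ refining $\mathcal U'$ still yields $\mathcal V(\mathcal U)$ refining $\mathcal V(\mathcal U')$. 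So I may assume I am handed a point-finite cover by convex open sets, with all endpoint data $l_V=\inf V$ and $u_V=\sup V$ (possibly gaps) available.

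The obstruction to local finiteness is one-sided accumulation: by convexity and point-finiteness, a convex refinement can fail to be locally finite at $x$ only when there are members $V$ with $x\notin V$ whose left endpoints satisfy $l_V\downarrow x$ (right-accumulation) or whose right endpoints satisfy $u_V\uparrow x$ (left-accumulation). The plan is to damp accumulation by two complementary devices controlled by the order data $l_V,u_V$: a \emph{right} family, assembled from members together with their right endpoints, and a \emph{mid} family, obtained by capping each relevant member at (a point near) the very point it is meant to cover. A naive union of ``furthest-reaching'' members does not work—selecting the member of largest $u_V$ damps left-accumulation but re-creates right-accumulation, and vice versa—so the two devices must be interleaved, the cap on one side being paid for by the reach on the other.

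For each $x$ I would then define an assigned convex neighborhood $N(x)$ by cases on the type of $x$: $\{x\}$ for $x\in I_\tau$; a half-open $[x,b)$ carved from a suitable member for $x\in R_\tau$; the symmetric $(a,x]$ for $x\in L_\tau$; and, for $x\in E_\tau$, an open interval capped on both sides, one side using a right endpoint and the other the mid device. In every case $N(x)$ lies inside a single member of $\mathcal V(\mathcal U)$, hence of $\mathcal U$, so $\{N(x):x\in X\}$ is an open convex cover refining $\mathcal U$. Local finiteness is checked at a point $y$ by intersecting the finitely many members containing $y$ to obtain a neighborhood $W$ and verifying, type by type, that the caps prevent the assigned neighborhoods from accumulating at $y$; this is precisely the step that spends point-finiteness of $\mathcal V(\mathcal U)$, and hence where the operator $r$ is actually used.

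The step I expect to be the main obstacle is \textbf{monotonicity}. When $\mathcal U$ refines $\mathcal U'$ we have $\mathcal V(\mathcal U)$ refining $\mathcal V(\mathcal U')$, but the member realizing a given endpoint in $\mathcal V(\mathcal U)$ need not sit inside the member realizing the corresponding endpoint in $\mathcal V(\mathcal U')$, so the selection that secures local finiteness is in tension with the containment $N_{\mathcal U}(x)\subseteq N_{\mathcal U'}(x)$ needed for $s(\mathcal U)$ to refine $s(\mathcal U')$. The caps therefore have to be defined from order data that is itself monotone under refinement, and the $E_\tau$ points are the hard case, since there both sides must be capped and neither one-sided ray is available to anchor the neighborhood. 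I would expect the bulk of the proof to be the case-by-case verification—using the monotonicity of $r$ together with convexification—that the right and mid caps nest correctly under refinement while still damping accumulation, which is what finally yields the monotone open locally-finite operator $s$.
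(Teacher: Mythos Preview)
Your convexification step matches the paper's first move exactly, but after that you head in a direction that creates the very difficulty you flag, while the paper sidesteps it with one line. Instead of assigning to each point a tailored neighborhood $N(x)$ with right- and mid-caps, the paper simply passes from the convex point-finite cover $r_1(\mathcal U)$ to the subfamily $r_2(\mathcal U)$ of its $\subseteq$-\emph{maximal} elements. Point-finiteness forbids strictly increasing chains, so every member lies inside a maximal one and $r_2(\mathcal U)$ still covers $X$. Monotonicity is then automatic and needs no case analysis: a maximal element of $r_1(\mathcal U)$ sits in some element of $r_1(\mathcal U')$, hence in a maximal one. Local finiteness is a five-line argument: if $r_2(\mathcal U)$ accumulated at $p$ from the left, pick $G\in r_2(\mathcal U)$ with $[h,p]\subseteq G$; every accumulating $V$ with $u_V\le p$ must have $l_V<h$ (else $V\subsetneq G$, contradicting maximality of $V$), so infinitely many such $V$ contain $h$, contradicting point-finiteness.

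Your proposal, by contrast, leaves monotonicity as an explicitly acknowledged gap, and I do not see how to close it along the lines you sketch. The caps you need for local finiteness are extracted from endpoint data of members of $\mathcal V(\mathcal U)$, but when $\mathcal U$ refines $\mathcal U'$ there is no reason the member of $\mathcal V(\mathcal U)$ realizing a given cap for $x$ sits inside the member of $\mathcal V(\mathcal U')$ realizing the corresponding cap; you note this tension yourself for $E_\tau$ points but offer no mechanism to resolve it. The missing idea is precisely the maximal-element trick: it replaces a delicate point-by-point selection (whose monotonicity is fragile) by a canonical global pruning that is monotone by construction and, thanks to convexity plus maximality, already locally finite.
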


\begin{proof}
Let $r$ be a monotone metacompactness operator for $X$.
For every open set $U$ let $\mathcal I(U)$ be the family
of all convex components of $U$, where $C\subseteq U$
is a convex component if $C$ is convex, and maximal
with respect to set inclusion (if $C\subseteq C_1 \subseteq U$
where $C_1$ is convex then $C=C_1$).
For each open cover $\mathcal U$ let
$r_1(\mathcal U)=\bigcup\{\mathcal I(U): U\in r(\mathcal U)\}$,
that is, we replace $r(\mathcal U)$ with the cover of all convex
components of elements of $r(\mathcal U)$.
Then $r_1$ is also a monotone metacompactness operator,
and $r_1(\mathcal U)$ consists of convex open sets.
Let $r_2(\mathcal U)=\{U\in r_1(\mathcal U): U$
is maximal in $r_1(\mathcal U)$ with respect to set inclusion$\}$ (where $U\in r_1(\mathcal{U})$ is maximal if
$U\subseteq V\in r_1(\mathcal{U})$ implies $U=V$).
Since $r_1(\mathcal U)$ is point-finite, it contains
no $\subseteq$-strictly increasing sequences. Hence
every element of $r_1(\mathcal U)$ is contained in a
maximal one, and $r_2(\mathcal U)$ covers $X$.
Clearly, $r_2(\mathcal U)$ is point-finite.

If $r_2(\mathcal U)$ were not locally-finite,
then we may fix $p\in X$ and a family
$\mathcal F\subseteq r_2(\mathcal U)$ that is
not locally-finite at $p$ and such that $p\not\in V$,
and either $u_V=\sup V\le p$ for all $V\in \mathcal F$
or $l_V=\inf V\ge p$ for all $V\in \mathcal F$.

Consider the former case (the other being dealt with similarly).
Then $p\in L_\tau\cup E_\tau$, and there is some
$G\in r_2(\mathcal U)$ and $h<p$ such that
$[h,p]\subseteq G$. Every $V\in\mathcal F$ is convex and $l_V=\inf V < h$
(for otherwise $V\subset G$).
There are infinitely many $V\in\mathcal F$ with $h < u_V$,
hence $\mathcal F$ is not point-finite at $h$, a contradiction.
Therefore, $r_2(\mathcal U)$ must be locally-finite.
\end{proof}

We do not know if the assumption that
$X$ is a GO-space could be weakened to the assumption that
$X$ is monotonically normal, or dropped altogether.

\begin{qu}\label{monometamonolf}
If $X$ is monotonically metacompact and monotonically normal,
must it have an (open or not):
(a) monotone locally-finite, or
(b) monotone closure-preserving operator?
\end{qu}

One cannot strengthen the conclusion of Theorem \ref{metaparaGO} to protometrizable spaces. The authors \cite[Example 2.1]{PP} provided an example
of a LOTS with a monotone open locally-finite operator that is not protometrzable.

The following characterization of monotone metacompactness is known
for GO-spaces $(X,\tau,<)$ for which the underlying LOTS
$(X,\lambda,<)$ has a $\sigma$-closed-discrete dense subset.

\begin{theorem}\label{monometa}
\cite[Theorem 1.4]{BHL}, \cite[Theorem 12]{PL}.
Let $(X, \tau ,<)$ be a GO-space whose underlying
LOTS $(X,\lambda,<)$ has a $\sigma$-closed-discrete dense subset.
Then the following are equivalent:

(a) $(X,\tau)$ is monotonically metacompact;

(b) $(X,\tau)$ is monotonically countably metacompact;

(c) the set $R_\tau\cup L_\tau$ is $\sigma$-closed-discrete in $(X, \tau)$;

(d) the set $R_\tau\cup L_\tau$ is $\sigma$-closed-discrete in $(X,\lambda)$.
\end{theorem}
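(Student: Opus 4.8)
The plan is to show all four conditions equivalent by closing the cycle (a) $\To$ (b) $\To$ (d) $\To$ (c) $\To$ (a), with the two easy links dispatched first. The implication (a) $\To$ (b) is immediate: every countable open cover is an open cover, so the restriction of a monotone metacompactness operator to the countable covers is already a monotone countable metacompactness operator. The link (d) $\To$ (c) is equally direct and uses only $\lambda\subseteq\tau$: every $\lambda$-closed set is $\tau$-closed (its complement is $\lambda$-open, hence $\tau$-open), and every $\lambda$-discrete subset is $\tau$-discrete because a finer topology supplies at least the same isolating neighborhoods; thus a set that is $\sigma$-closed-discrete in the coarse order topology is automatically $\sigma$-closed-discrete in the GO-topology. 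Note I do \emph{not} expect the hypothesis to make $(X,\lambda)$ metrizable --- the double arrow LOTS is separable, hence has a $\sigma$-closed-discrete dense subset, yet is not metrizable --- so the dense set must be used directly as combinatorial scaffolding rather than through a metric.

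For (c) $\To$ (a) I would build the operator by separating the order-theoretic behaviour from the one-sided points. The only places where $\tau$ refines $\lambda$ in a non-isolated fashion are the points of $R_\tau\cup L_\tau$; isolated points are harmless, since they can be covered canonically by singletons. Writing $R_\tau\cup L_\tau=\bigcup_n F_n$ with each $F_n$ $\tau$-closed-discrete, and fixing once and for all a $\sigma$-closed-discrete dense subset $D=\bigcup_n D_n$ of $(X,\lambda)$ as a skeleton, I would define, for each open cover $\mathcal U$, a point-finite open refinement $r(\mathcal U)$ as follows: on the part of $X\ms(R_\tau\cup L_\tau)$ use the canonical point-finite refinement of the order topology driven by the enumeration of $D$, and at each $x\in F_n$ adjoin its canonical one-sided neighborhood $[x,\to)$ or $(\from,x]$, trimmed to lie inside a member of $\mathcal U$ selected by a fixed rule (for instance, by least index in $D$). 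The fixed selection rule is exactly what forces monotonicity: if $\mathcal U$ refines $\mathcal V$, the induced choices make $r(\mathcal U)$ refine $r(\mathcal V)$. Point-finiteness holds because the order part is point-finite and, level by level, the one-sided pieces attached along a $\tau$-closed-discrete set form a point-finite family.

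The crux is (b) $\To$ (d), which I would prove by contraposition. Assume $R_\tau\cup L_\tau$ is not $\sigma$-closed-discrete in $(X,\lambda)$; by the symmetric roles of $R_\tau$ and $L_\tau$ I may assume some $S\subseteq R_\tau$ fails to be $\sigma$-closed-discrete in $\lambda$. Each $x\in S$ is a $\lambda$-limit from the right whose $\tau$-neighborhoods include right-open intervals $[x,b)$, and the failure of $\sigma$-closed-discreteness supplies a $\lambda$-accumulation of such points that cannot be split into countably many closed-discrete pieces. Using the dense skeleton $D=\bigcup_n D_n$ to index separating intervals, I would manufacture a descending sequence of countable open covers $\mathcal U_0,\mathcal U_1,\dots$, each refining its predecessor, whose right-open members attached to points of $S$ must survive essentially intact in any point-finite open refinement; monotonicity then forces the images $r(\mathcal U_k)$ to stabilize onto a family that is not point-finite near the accumulation, contradicting point-finiteness of a monotone countable metacompactness operator.

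I expect (b) $\To$ (d) to be the main obstacle. The delicate task is to convert a purely order-theoretic failure of $\sigma$-closed-discreteness into a concrete countable family of covers on which monotonicity and point-finiteness genuinely clash: one must (i) reduce to a single side, (ii) use $D$ to enumerate the separating data coherently across the entire descending sequence, and (iii) pin the covers tightly enough that the operator has no room to spread the accumulating one-sided neighborhoods apart. This is precisely the Bennett--Hart--Lutzer style obstruction, and arranging the cross-level coherence of the covers is where the real work lies; by comparison, the coherence bookkeeping in the construction (c) $\To$ (a) is fiddly but follows an expected pattern once the skeleton $D$ is fixed.
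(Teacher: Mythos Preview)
The paper does not prove this theorem; it is quoted with citations to \cite{BHL} and \cite{PL} and used as a known result, so there is no in-paper proof to compare your proposal against. Your cycle and the two trivial links (a)$\To$(b) and (d)$\To$(c) are correct.

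For the substantive implications your outline diverges from the Bennett--Hart--Lutzer argument, whose shape the present paper does reproduce in the closure-preserving analogues Lemma~\ref{rightandleft} and Proposition~\ref{underLOTS}. For (b)$\To$(c),(d) they do not argue by contraposition: to each $p\in R_\tau$ they apply the operator to the single two-element cover $\mathcal U(p)=\{(\from,p),[p,\to)\}$, pick $G(p)\ni p$ with $[p,x_p)\subseteq G(p)$, and stratify $R_\tau$ by the least $n$ with $D_n\cap(p,x_p)\neq\0$. A strictly decreasing sequence $p_k\to p$ within one stratum forces a single point into every $G(p_k)$; monotonicity is invoked just once, comparing each $\mathcal U(p_k)$ to the countable cover $\bigcup_k\mathcal U(p_k)$, to contradict point-finiteness. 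Your contrapositive plan---start from a failure of $\sigma$-closed-discreteness and build a descending tower of countable covers---is not obviously wrong, but you have not said how to extract a usable configuration from ``not $\sigma$-closed-discrete,'' nor how monotonicity across the tower actually pins the refinements down; as written this is a gap, not just bookkeeping.

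For (c)$\To$(a), your ``canonical point-finite refinement of the order topology driven by $D$'' does not exist as stated: $(X,\lambda)$ need not be monotonically metacompact (the double arrow is compact and non-metrizable, hence not monotonically metacompact by \cite{CG}), so there is no off-the-shelf operator on the $\lambda$-side onto which to splice one-sided neighborhoods along the $F_n$. The BHL construction of the operator is genuinely different and does not factor through an operator for $(X,\lambda)$.
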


The Michael line $M$ satisfies all conditions above, even though
$M$ itself has no $\sigma$-closed-discrete dense subset.
It is also protometrizable -- equivalent to having a monotone
star-refinement operator \cite[Theorem 1]{GM},  and has a monotone
open locally-finite operator \cite[Corollary 1.7]{PP}.
The following proposition was used
in the proof of Theorem~\ref{monometa}.

\begin{pr}\label{3.8BHL}
\cite[Proposition 3.8]{BHL}, \cite[Proposition 13]{PL}.
Suppose $(X, \tau ,<)$ is a GO-space for which
the underlying LOTS $(X,\lambda,<)$ has
a $\sigma$-closed-discrete dense set.
If $(X,\tau)$ is monotonically countably metacompact,
then $R_\tau\cup L_\tau$ is $\sigma$-closed-discrete
as a subspace of $(X, \tau)$ and as a subspace of $(X,\lambda)$.
\end{pr}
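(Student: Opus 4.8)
The plan is to prove that $R_\tau$ is $\sigma$-closed-discrete in the underlying LOTS $(X,\lambda)$. The same argument applied to the reverse order then handles $L_\tau$; a finite union of $\sigma$-closed-discrete sets is again $\sigma$-closed-discrete; and since $\lambda\subseteq\tau$, any set that is $\sigma$-closed-discrete in $(X,\lambda)$ is automatically so in the finer space $(X,\tau)$, so both conclusions follow simultaneously. Writing the given dense set as $D=\bigcup_n D_n$ with each $D_n$ $\lambda$-closed-discrete and $D_n\subseteq D_{n+1}$, I would first dispose of $R_\tau\cap D=\bigcup_n(R_\tau\cap D_n)$, which is $\sigma$-closed-discrete because each $R_\tau\cap D_n$ is a subset of the closed-discrete set $D_n$. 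It then remains to prove that $R:=R_\tau\setminus D$ is $\sigma$-closed-discrete.

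The geometric input from the filtration is as follows. If $r\in R_\tau$ then the minimal $\tau$-neighbourhoods of $r$ are the half-open sets $[r,b)$, so $r$ is isolated on the left and a limit from the right in $(X,\tau)$; as $\{r\}\notin\tau$ there is no immediate successor, and $\lambda$-density of $D$ forces $D$ to accumulate at $r$ from the right. Because each $D_n$ is $\lambda$-closed-discrete it cannot accumulate at $r$; hence, writing $J_n(r)$ for the convex component of $X\setminus D_n$ containing $r$ (well defined since $r\notin D_n$ when $r\in R$), its endpoints $f_n(r)=\inf J_n(r)$ and $e_n(r)=\sup J_n(r)$ satisfy $f_n(r)<r<e_n(r)$ (possibly as gaps of $X$). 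Since $D$ is dense these components shrink to $\{r\}$, and $[r,e_n(r))=[r,\to)\cap(\from,e_n(r))$ is a $\tau$-neighbourhood of $r$ meeting $D_n$ in no point.

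The engine of the proof is monotone countable metacompactness, run by contradiction. Since $R$ need only be $\sigma$-closed-discrete and may genuinely carry accumulation points, I would stratify $R=\bigcup_n R_n$ using the levels of the filtration, the strata being chosen so that convergence \emph{within} a single stratum is incompatible with the operator, and then prove each $R_n$ to be $\lambda$-closed-discrete. Suppose some $R_n$ had a $\lambda$-accumulation point $p$; extracting a monotone sequence $r_1,r_2,\dots\in R_n$ with $r_k\to p$ (reducing the general monotone-net case to the countable one by means of the filtration $D_n$), the crucial order-theoretic observation is that the right rays $[r_k,\to)$, which are $\tau$-open precisely because each $r_k\in R_\tau$, all contain a common point $x$ lying to the right of every $r_k$ (one may take $x=p$ when $r_k\upto p$, and $x=r_1$ when $r_k\downto p$). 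Interpolating guard points from $D$ and using them to carve out the $\tau$-open pieces $[r_k,r_{k+1})$, I would form the \emph{countable} open covers $\mathcal W_j=\{[r_k,r_{k+1}):k<j\}\cup\{[r_j,\to)\}\cup\{(\from,r_1)\}$, which decrease (each $\mathcal W_{j+1}$ refines $\mathcal W_j$) and hence, by monotonicity, yield a coherent decreasing sequence of point-finite refinements $g(\mathcal W_1)\succeq g(\mathcal W_2)\succeq\cdots$. The aim is to track the refinement members at $x$ and at the $r_k$ through this sequence and to show that coherence forces $x$, a point met by every tail ray, into unboundedly many members of a single point-finite $g(\mathcal W_j)$.

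I expect the last step to be the main obstacle, and it is twofold, being the technical heart of \cite[Proposition 3.8]{BHL} and \cite[Proposition 13]{PL}. First, the family $\{[r,\to):r\in R_\tau\}$ that manifestly witnesses a failure of $\sigma$-closed-discreteness is \emph{uncountable}, so monotone countable metacompactness does not apply to it directly; the resolution is exactly the localization above, passing to a single countable convergent configuration and to the countable covers $\mathcal W_j$ built from it, with the filtration $D_n$ supplying both the guard points and the reduction from uncountable accumulation to the countable case. Second, an arbitrary convergent sequence in $R_\tau$ must \emph{not} defeat the operator — otherwise $R_\tau$ would be closed-discrete, which is false in general — so the delicate point is to define the strata $R_n$, in terms of the levels of $D_n$, precisely so that intra-stratum convergence forces the incompatibility that a single convergent sequence need not. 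Granting this, each $R_n$, hence $R$, hence $R_\tau=(R_\tau\cap D)\cup R$, is $\sigma$-closed-discrete in $(X,\lambda)$; combining with the symmetric statement for $L_\tau$ shows that $R_\tau\cup L_\tau$ is $\sigma$-closed-discrete in $(X,\lambda)$, and therefore in $(X,\tau)$ as well.
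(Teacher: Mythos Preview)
The paper does not prove this proposition itself---it is cited from \cite{BHL,PL}---but the paper's proof of the parallel Proposition~\ref{underLOTS} is explicitly modeled on that argument, so the intended method is visible there. Your outline has the right outer shape (stratify $R_\tau$ via the levels $D_n$; contradict an intra-stratum convergent sequence with the operator), but the two load-bearing pieces are missing or pointed in the wrong direction.

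First, you never actually define the strata $R_n$; you call this ``the delicate point'' and leave it open. In the BHL/PL argument the stratification is produced \emph{by the operator}: for each $p\in R_\tau$ one applies the operator to the two-element cover $\mathcal U(p)=\{(\leftarrow,p),[p,\to)\}$, obtains an open $G(p)\subseteq[p,\to)$ with $[p,x_p)\subseteq G(p)$ for some $x_p>p$, and sets $n(p)$ to be the least $n$ with $(p,x_p)\cap D_{n}\ne\emptyset$. The stratum is $R_\tau(n)=\{p:n(p)=n\}$. This is precisely what makes intra-stratum convergence collide with point-finiteness: if $p_k\searrow p$ inside $R_\tau(n)$ and $(p,p_0)\cap D_n=\emptyset$, then necessarily $x_{p_k}>p_0$, so $p_0\in G(p_k)$ for all $k$; the analogue of Lemma~\ref{rightandleft} (proved by pushing everything into the refinement of the single union cover $\bigcup_k\mathcal U(p_k)$) then gives the contradiction. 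Second, your proposed covers $\mathcal W_j$ do not generate such a contradiction: on $[r_1,\to)$ the members of $\mathcal W_j$ are pairwise disjoint, so $g(\mathcal W_j)$ is point-finite there for free, and ``coherence'' across $j$ does not place the common point $x$ into unboundedly many members of any \emph{fixed} $g(\mathcal W_j)$. The BHL/PL device is the opposite one: a single target cover into which all the $G(p_k)$ are mapped. Finally, working directly in $(X,\lambda)$ is harder than needed; in $(X,\tau)$ points of $R_\tau$ are left-isolated, so only decreasing sequences arise, and one then upgrades from $\sigma$-relatively-discrete in $(X,\tau)$ to $\sigma$-closed-discrete in both topologies via \cite[Lemma~2.4]{BHL} (equivalently \cite[Lemma~16]{PL}).
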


We will prove a similar result for spaces with
a monotone closure-preserving operator $r$,
but first we need to modify $r$.

\begin{lemma}\label{GOopenenough}
Suppose $(X,\tau,<)$ is a GO-space
with a monotone closure-preserving operator $r$.
Then $X$ has a closed convex monotone closure-preserving operator $\bar r$
such that, for every open cover $\mathcal U$ :
\begin{itemize}
\item[(a)] if $x\in R_\tau\cup E_\tau$ then there is
$g_x>x$ and $G\in \bar r(\mathcal U)$ with
$[x,g_x)\subseteq G$, and
\item[(b)] if $x\in L_\tau\cup E_\tau$ then there is
$h_x<x$ and $H\in \bar r(\mathcal U)$ with
$(h_x,x]\subseteq H$.
\end{itemize}
\end{lemma}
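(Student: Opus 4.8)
The plan is to realize $\bar r$ by first passing to a canonical convex open refinement of $\mathcal U$, then applying $r$, and finally replacing each resulting piece by its closed convex hull. Concretely, for an open cover $\mathcal U$ let $\mathcal U^\#$ be the family of all convex open sets $V$ with $\overline V\subseteq U$ for some $U\in\mathcal U$. Since a GO-space is regular and has a base of convex open sets, $\mathcal U^\#$ is again an open cover, and it refines $\mathcal U$. The assignment $\mathcal U\mapsto\mathcal U^\#$ is monotone in the strong sense that $\mathcal U^\#\subseteq\mathcal V^\#$ whenever $\mathcal U$ refines $\mathcal V$: if $\overline V\subseteq U\subseteq W$ with $U\in\mathcal U$ and $W\in\mathcal V$, then $V\in\mathcal V^\#$.

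I would then set $\bar r(\mathcal U)=\{\,\overline{\conv(W)}:W\in r(\mathcal U^\#)\,\}$. Each element is closed and convex by construction. Because $r(\mathcal U^\#)$ refines $\mathcal U^\#$, every $W$ lies in some convex $V\in\mathcal U^\#$, whence $\overline{\conv(W)}\subseteq\overline V\subseteq U$; thus $\bar r(\mathcal U)$ refines $\mathcal U$, and it covers $X$ since $W\subseteq\overline{\conv(W)}$. Monotonicity of $\bar r$ is inherited: $\mathcal U^\#$ refines $\mathcal V^\#$, so $r(\mathcal U^\#)$ refines $r(\mathcal V^\#)$ by monotonicity of $r$, and $W\subseteq W'$ forces $\overline{\conv(W)}\subseteq\overline{\conv(W')}$.

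The main obstacle is closure-preservation, since passing from $r(\mathcal U^\#)$ to convex hulls enlarges the sets. The key point, which I would isolate as a small lemma, is that in a LOTS the operation $F\mapsto\overline{\conv(F)}$ preserves closure-preservation. To see this, let $\mathcal H\subseteq r(\mathcal U^\#)$ and let $p\in\overline{\bigcup_{W\in\mathcal H}\overline{\conv(W)}}$; in a GO-space $p$ is approached from one side, say the right, by points $q_i\to p$ with $q_i\in\overline{\conv(W_i)}$ and $W_i\in\mathcal H$. If some $\overline{\conv(W_i)}$ has infimum $\le p$, convexity places $p$ in it; otherwise $\inf W_i=\inf\overline{\conv(W_i)}\to p$, so points of $\bigcup_{\mathcal H}W$ approach $p$ and the closure-preservation of $r(\mathcal U^\#)$ gives $p\in\overline{W_0}\subseteq\overline{\conv(W_0)}$ for some $W_0\in\mathcal H$. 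Either way $p\in\bigcup_{\mathcal H}\overline{\conv(W)}$, so this union is closed, as required. It is exactly here that the closure-refinement $\mathcal U^\#$ is needed: keeping every $\overline{\conv(W)}$ inside a single $U$ is what prevents the hulls from swelling past $\mathcal U$.

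Finally I would deduce (a) and (b) from the same closure-preservation mechanism. Fix $x\in R_\tau\cup E_\tau$; then points above $x$ accumulate at $x$. If no element of $\bar r(\mathcal U)$ met $(x,\to)$ while containing $x$, then every element through $x$ would lie in $(\from,x]$, and choosing $x_n\to x$ with $x_n>x$ and $x_n\in G_n\in\bar r(\mathcal U)$ would force $\inf G_n>x$ for all $n$ (else convexity of $G_n$ would put both $x$ and a point above it in $G_n$). Then $x\notin\bigcup_n G_n$ yet $x\in\overline{\bigcup_n G_n}$, contradicting the closure-preservation of the closed family $\{G_n\}$. Hence some $G\in\bar r(\mathcal U)$ contains $x$ together with a point above it, and by convexity contains $[x,g_x)$ for some $g_x>x$; this is (a). Statement (b) is symmetric, replacing right by left. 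I expect the convex-hull closure-preservation lemma to be the crux; the remaining verifications are routine bookkeeping about refinement and order.
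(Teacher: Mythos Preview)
Your construction is essentially the paper's: the authors set $c(\mathcal U)=\{C\text{ convex open}:[C]\subseteq U\text{ for some }U\in\mathcal U\}$ and $\bar r(\mathcal U)=\{[A]:A\in r(c(\mathcal U))\}$, where $[A]=[\inf A,\sup A]$; they even remark in a footnote that one may use $\clo_\tau(\conv(A))$ in place of $[A]$, which is exactly your $\overline{\conv(W)}$. Your $\mathcal U^\#$ (with $\overline V\subseteq U$) is a mild variant of their $c(\mathcal U)$ (with $[C]\subseteq U$), and the closure-preservation and parts (a)--(b) are argued via the same mechanism: if the enlarged convex sets failed to be closure-preserving at $p$ from one side, then the infima of the original $r$-pieces would already witness a failure of closure-preservation for $r$.

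There is, however, a genuine gap in your write-up: you repeatedly pick \emph{sequences} $q_i\to p$ and $x_n\to x$, but a GO-space need not be first countable (e.g.\ $\omega_1+1$), so such sequences need not exist. The paper avoids this by working with the full family $\{A_y:y>x\}$, choosing for every $y>x$ an $A_y\in r(c(\mathcal U))$ with $y\in A_y$, and observing that if $l_{A_y}>x$ for all $y>x$ then this family is not closure-preserving at $x$. Your arguments go through verbatim once you replace the sequences by such indexed families (or by nets), so the fix is mechanical, but as written the sequential phrasing is not justified.
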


\begin{proof}
Given any open cover $\mathcal U$ let
$c(\mathcal U) = \{C\subseteq X : C$ is an open convex subset of X  and $[C]\subseteq U$ for some $U\in \mathcal{U}\}$.
It is easily seen that $c(\mathcal U)$ is an open
cover refining $\mathcal U$, and $c$ is monotone.
Let $\bar r(\mathcal U)=\{[A]: A \in r(c(\mathcal U))\}$.\footnote{
Note that $[A]=\clo_\lambda(\conv(A))$ where $\lambda$ is the
underlying LOTS topology. The above proof would work equally well
if we used $\clo_\tau(\conv(A))$ instead of $\clo_\lambda(\conv(A))$.}

Clearly $\bar r(\mathcal U)$ is a cover of $X$
with closed convex sets and $\bar r$ is monotone.
Also, $\bar r(\mathcal U)$ refines $\mathcal U$ since
if $A\in r(c(\mathcal U))$ then there are
$C\subseteq X$ and $U\in\mathcal U$ such that
$C\in c(\mathcal U)$ and
$A\subseteq C\subseteq [C]\subseteq U$,
hence $[A]\subseteq [C]\subseteq U$.

Suppose $\bar r(\mathcal{U})$ were not
closure-preserving for some $\mathcal{U}$.
Then there is a family $\mathcal A\subseteq r(c(\mathcal U))$
and some
$p\in\cl{\cup\{[A]:A\in\mathcal A\}}\setminus
\cup\{[A]:A\in\mathcal A\}$.
Since each $[A]$ is convex
we have that either $p< l_A$ or $p> u_A$.
We may assume without loss of generality
that $p\in R_\tau\cup E_\tau$ and
$p<l_A$ for all $A\in\mathcal A$.
It is easily seen that $p=\inf\{l_A:A\in\mathcal A\}$
and $p\in\cl{\cup\mathcal A}\setminus
\cup\{\cl{A}:A\in\mathcal A\}$,
contradicting that $r(c(\mathcal U))$ is closure-preserving.
Thus $\bar r(\mathcal{U})$ is closure-preserving
for all open covers $\mathcal{U}$.

To prove (a), fix $x\in R_\tau\cup E_\tau$.
Then $x\in\cl{(x,\to)}$.
For every $y>x$ there is $A_y\in r(c(\mathcal U))$
with $y\in A_y$.
It is enough to show that $l_{A_y}\le x$
for some $y>x$, then we would have
that $[x,u_{A_y})\subseteq [A_y]\in \bar r(\mathcal U)$
and we may pick any $g_x\in(x,u_{A_y})$.
If $l_{A_y}>x$ for each $y>x$ then the family
$\{A_y:y>x\}$ is not closure-preserving
at $x$, contradicting that $r(c(\mathcal U))$ is
closure-preserving.
The proof of (b) is similar.
\end{proof}

The proof of the following lemma is similar to the proof of \cite[Lemma 17]{PL}.

\begin{lemma}\label{rightandleft}
Suppose $(X,\tau,<)$ is a GO-space with
a monotone closure-preserving operator $r$,
$y_n\in R_\tau$ with $y_{n+1}<y_n$ for each $n\in \w$,
and the $y_n$ converge to $y$.
Let $\mathcal{U}_n=\{(\leftarrow ,y_n),[y_n,\rightarrow )\}$.
If $\bar r$ is the monotone operator described in
the proof of Lemma \ref{GOopenenough},
and $G_n\in \bar r(\mathcal{U}_n)$ such that
$y_n\in G_n$, then $\{G_n:n\in\w\}$ is point-finite.
(A similar statement holds for $y_n\in L_\tau$ with
$y_n\nearrow y$ and $\mathcal{U}_n=
\{(\leftarrow ,y_n],(y_n,\rightarrow )\}$.)
\end{lemma}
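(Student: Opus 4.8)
The plan is to argue by contradiction, and the single idea that drives everything is to collapse the infinitely many \emph{different} covers $\mathcal U_n$ into \emph{one} cover by means of the monotonicity of $\bar r$, so that the closure--preservation of $\bar r$ on that one cover (established in Lemma~\ref{GOopenenough}) can be brought to bear. It is worth noting at the outset why the obvious attempts fail: the closure--preservation of each individual $\bar r(\mathcal U_n)$ is of no use, since the $G_n$ belong to different covers; and closure--preservation does not in general imply point--finiteness (e.g.\ $\{[y,c]\}$ versus $\{[y_n,c]\}$ in $\R$), so the argument must genuinely exploit that the left endpoints $y_n$ accumulate at the limit $y$ from the right.

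So suppose $\{G_n:n\in\w\}$ is not point--finite and fix a point $z$ with $z\in G_n$ for all $n$ in some infinite set $S\subseteq\w$. Since $y_n$ strictly decreases to $y$ we have $y<y_n$ for every $n$. Moreover $\bar r(\mathcal U_n)$ refines $\mathcal U_n$ and $y_n\in G_n$, so $G_n\subseteq[y_n,\rightarrow)$; hence for any $n\in S$ we get $z\ge y_n>y$, and in particular $z>y$. (Here $\mathcal U_n$ is a genuine \emph{open} cover precisely because $y_n\in R_\tau$, so that $\bar r(\mathcal U_n)$ is defined.) Now introduce the two--element open cover $\mathcal W=\{(\leftarrow,z),(y,\rightarrow)\}$; it is a cover because $y<z$, and both members are order--open, hence $\tau$--open. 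For every $n\in S$ we have $y<y_n\le z$, whence $\mathcal U_n$ refines $\mathcal W$: indeed $(\leftarrow,y_n)\subseteq(\leftarrow,z)$ and $[y_n,\rightarrow)\subseteq(y,\rightarrow)$. By monotonicity of $\bar r$, each $G_n$ (for $n\in S$) is contained in some $H_n\in\bar r(\mathcal W)$.

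The key step is the forcing: because $z\in G_n\subseteq H_n$ while $z\notin(\leftarrow,z)$, the set $H_n$ cannot lie in the left member of $\mathcal W$, so $H_n\subseteq(y,\rightarrow)$; in particular $y\notin H_n$. Thus the subfamily $\{H_n:n\in S\}\subseteq\bar r(\mathcal W)$ consists of closed sets (elements of $\bar r$ are closed and convex) none of which contains $y$, so $y\notin\bigcup_{n\in S}H_n=\bigcup_{n\in S}\cl{H_n}$. On the other hand $y_n\in G_n\subseteq H_n$ and $y_n\to y$, so $y\in\cl{\bigcup_{n\in S}H_n}$. Since $\bar r(\mathcal W)$ is closure--preserving, $\cl{\bigcup_{n\in S}H_n}=\bigcup_{n\in S}\cl{H_n}$, and therefore $y\in\bigcup_{n\in S}H_n$, a contradiction. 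This proves $\{G_n:n\in\w\}$ is point--finite. The stated variant for $y_n\in L_\tau$ with $y_n\nearrow y$ is the exact left--right mirror: one has $z\le y_n<y$, and the same argument goes through with $\mathcal W=\{(\leftarrow,y),(z,\rightarrow)\}$, which forces $H_n\subseteq(\leftarrow,y)$.

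The main obstacle, and essentially the entire content of the proof, is the choice of $\mathcal W$: one must split the order at a point lying strictly between the limit $y$ and the offending point $z$, so that monotonicity pushes \emph{all} the relevant $G_n$ into the single half $(y,\rightarrow)$ that omits $y$, while the endpoints $y_n$ continue to descend to $y$. A coarser or differently placed split is fatal: splitting at $y_0$ fails to keep the $H_n$ off of $y$ (a single large element of $\bar r(\mathcal W)$ could then absorb all the $G_n$, and no contradiction arises), while splitting at $y$ itself does not produce a cover. It is this placement that converts the accumulation of the $y_n$ at $y$ into a bona fide failure of closure--preservation within one cover.
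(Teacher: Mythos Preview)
Your proof is correct and follows essentially the same approach as the paper's: assume a common point in infinitely many $G_n$, use monotonicity of $\bar r$ to push all the relevant $G_n$ into a single $\bar r(\mathcal V)$, force the resulting $H_n$ into the right half-line via the common point, and obtain a failure of closure-preservation at $y$. The only difference is cosmetic: the paper collapses via $\mathcal U=\bigcup_n\mathcal U_n$ (trivially refined by each $\mathcal U_n$), whereas you use the two-element cover $\mathcal W=\{(\leftarrow,z),(y,\rightarrow)\}$; your choice is slightly cleaner but the mechanism is identical.
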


\begin{proof}
Suppose $\{G_n : n \in \w\}$ were not point-finite.
Taking a subsequence of the $y_n$ we may assume
that there is some $p\in \bigcap \{G_n:n\in\w\}$.
Then $y_n\in G_n\subseteq [ y_n,\to )$, hence $y_n\leq p$ for each $n$.
If $\mathcal{U} = \bigcup \{\mathcal{U}_n : n\in\w\}$,
then $\mathcal{U}_n$ refines $\mathcal{U}$ for each $n$, hence
$\bar r(\mathcal{U}_n)$ refines $\bar r(\mathcal{U})$.
There are $H_n \in \bar r(\mathcal{U})$ and $m_n \geq n$
with $y_n \in G_n \subseteq H_n\subseteq [y_{m_n},\to )$.
Since $y_n$ converges to $y$, the family
$\{H_n : n\in\w \}\subseteq \bar r(\mathcal{U})$
is not closure preserving at $y$.
This contradiction completes the proof.
\end{proof}

The proof of the following proposition is modeled after the proof of \cite[Proposition 13]{PL} and \cite[Proposition 3.8]{BHL} (stated as Proposition~\ref{3.8BHL} here).

\begin{pr}\label{underLOTS}
Suppose $(X, \tau ,<)$ is a GO-space for which
the underlying LOTS $(X,\lambda,<)$ has
a $\sigma$-closed-discrete dense set.
If $(X,\tau)$ has a monotone closure-preserving operator $r$
then $R_\tau\cup L_\tau$ is $\sigma$-closed-discrete
as a subspace of $(X, \tau)$ and as a subspace of $(X,\lambda)$.
\end{pr}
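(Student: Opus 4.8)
The plan is to show that $R_\tau$ is $\sigma$-closed-discrete, the case of $L_\tau$ being symmetric under reversal of the order and a finite union of $\sigma$-closed-discrete sets being again one. Both statements, in $\tau$ and in $\lambda$, are required; since a set that is closed and discrete in the coarser topology $\lambda$ is automatically closed and discrete in the finer topology $\tau$, the $\lambda$-version is the stronger. However, the only hypothesis that sees the operator is closure-preservation, which is a property of $\tau$, and in $\tau$ every $x\in R_\tau$ has least neighborhoods $[x,b)$ and is therefore isolated from the left. I would accordingly first prove $\sigma$-closed-discreteness in $\tau$ using the operator, and afterwards upgrade to $\lambda$ by the standard order/density argument from the $\sigma$-closed-discrete dense set $D=\bigcup_{n\in\w}D_n$, each $D_n$ closed and discrete in $(X,\lambda)$.

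First I would replace $r$ by the closed convex operator $\bar r$ of Lemma~\ref{GOopenenough} and attach to each $x\in R_\tau$ a witness. Applying $\bar r$ to $\mathcal U_x=\{(\leftarrow,x),[x,\to)\}$ and noting that $x$ lies in no member of $\mathcal U_x$ other than $[x,\to)$, Lemma~\ref{GOopenenough}(a) yields $G_x\in\bar r(\mathcal U_x)$ with $x=\min G_x$ and $[x,u_{G_x})\subseteq G_x$, where $u_{G_x}=\sup G_x>x$. Because $D_n$ is closed discrete it has no accumulation point, so $D_n\cap(x,u_{G_x})$, when nonempty, has inf $>x$ and attains it; let $n(x)$ be the least $n$ for which this intersection is nonempty and put $d_x=\min\big(D_{n(x)}\cap(x,u_{G_x})\big)$, so that $(x,d_x)\cap D_{n(x)}=\emptyset$. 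Set $R_n=\{x\in R_\tau:n(x)=n\}$; together with the points $x$ for which $(x,u_{G_x})$ contains no point of $D$ (necessarily points with an immediate successor, handled analogously with the witness placed at that successor), these pieces cover $R_\tau$.

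The heart of the matter is to show each $R_n$ is closed discrete, i.e. to rule out a monotone sequence in $R_n$ converging to some $y$. A decreasing sequence $x_k\downarrow y$ is the case in which the clopen rays point away from the limit, and here Lemma~\ref{rightandleft} applies directly: $\{G_{x_k}\}$ is point-finite, so for each $p>y$ all but finitely many $G_{x_k}$ satisfy $u_{G_{x_k}}<p$, whence $u_{G_{x_k}}\to y$; since $x_k<d_{x_k}<u_{G_{x_k}}$ the squeeze gives $d_{x_k}\to y$, producing infinitely many distinct points of $D_n$ accumulating at $y$ and contradicting that $D_n$ is closed discrete. For an increasing sequence $x_k\uparrow y$ I would compare $d_{x_k}$ with $x_{k+1}$: whenever $(x_k,x_{k+1})$ meets $D_n$ the choice $(x_k,d_{x_k})\cap D_n=\emptyset$ forces $d_{x_k}<x_{k+1}$, so that the $d_{x_k}$ fall in the pairwise disjoint intervals $(x_k,x_{k+1})$ and again converge to $y$, contradicting $D_n$.

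The main obstacle is the remaining increasing case, where the clopen rays point toward the limit, $G_{x_k}$ swallows $x_{k+1}$ (and eventually $y$), and every between-interval $(x_k,x_{k+1})$ meets $D$ only in indices $\neq n$. Closure-preservation gives no purchase here, precisely because in $\tau$ the points $x_k\in R_\tau$ are isolated from the left, so such a sequence converges in $\tau$ only when $y\in L_\tau\cup E_\tau$. In that situation I would invoke Lemma~\ref{GOopenenough}(b) at $y$ to obtain, for the common refinement $\mathcal U=\bigcup_k\mathcal U_{x_k}$, a member $H\in\bar r(\mathcal U)$ with $(h_y,y]\subseteq H$, and exploit that each nonempty interval $(x_k,x_{k+1})$ carries a dense point $b_k$: if one index recurs among the $b_k$ we again find a $D_m$ accumulating at $y$, and otherwise the indices of the $b_k$ must tend to infinity, which I expect to be incompatible with the monotonicity of $\bar r$ together with closure-preservation of the family built from $H$ and the $G_{x_k}$. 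Closing this last sub-case cleanly, and then passing from the $(X,\tau)$ statement to the $(X,\lambda)$ statement through the $\sigma$-closed-discrete dense set, is where the real work lies; the decreasing direction and the bookkeeping of the decomposition are routine given Lemmas~\ref{GOopenenough} and~\ref{rightandleft}.
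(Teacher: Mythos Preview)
Your decreasing-sequence argument is fine and matches the paper's in spirit, but the increasing case is a genuine gap: the sketch you give for $x_k\uparrow y$ with $y\in L_\tau\cup E_\tau$ is not an argument, and I do not see how to complete it along the lines you suggest. The paper simply never faces this case. The key step you are missing is the reduction from $\sigma$-closed-discrete to $\sigma$-\emph{relatively}-discrete: by \cite[Lemma~2.4]{BHL} (equivalently \cite[Lemma~16]{PL}), in a first-countable GO-space whose underlying LOTS has a $\sigma$-closed-discrete dense set, it suffices to show that $R_\tau\cup L_\tau$ is $\sigma$-relatively-discrete in $(X,\tau)$; the upgrade to $\sigma$-closed-discrete in both $\tau$ and $\lambda$ then comes for free. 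With that reduction in hand, you only need to show that each piece $R_\tau(n)$ has no accumulation point \emph{inside itself}. But any such accumulation point $p$ lies in $R_\tau$, so $[p,\to)$ is $\tau$-open and only a \emph{decreasing} sequence $p_k\downarrow p$ can witness the failure. The increasing case you struggle with never arises.

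The paper's decreasing argument is also a bit cleaner than yours: since $D_n$ is closed discrete, one may assume $(p,p_0)\cap D_n=\emptyset$; then for every $k$ the witness interval $[p_k,x_{p_k})$, which must meet $D_n$, is forced to reach past $p_0$, so $p_0\in G(p_k)$ for all $k$, directly contradicting the point-finiteness given by Lemma~\ref{rightandleft}. This avoids your squeeze on $u_{G_{x_k}}$ and the distinctness check on the $d_{x_k}$.
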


\begin{proof}
Let $D =\bigcup \{D_n: n\in\mathbb{N}\}$ be dense
in $(X,\lambda)$ where each $D_n$ is closed-discrete
in $(X,\lambda)$ (hence also in $(X,\tau)$). It is easily seen that $(X,\tau)$ is first-countable.
By \cite[Lemma 2.4]{BHL}, \cite[Lemma 16]{PL}
it is enough to show that $R_\tau\cup L_\tau$
is $\sigma$-relatively discrete as a subspace of $(X, \tau)$.

For each $p\in R_\tau$, let $\mathcal{U}(p)=\{(\from ,p),[p,\to )\}$.
Let $\bar r$ be the monotone operator described
in the proof of Lemma \ref{GOopenenough}.
Choose $G(p)\in \bar r(\mathcal{U}(p))$ and
$x_p>p$ such that $[p,x_p)\subseteq G(p)$.
There is $n(p)$ such
that $(p, x_p)\cap D_{n(p)}\not = \0$.
Let $R_\tau(n) = \{p\in R_\tau:n(p) =n\}$.
Clearly $R_\tau =\cup\{R_\tau(n) : n\in \w\}$.
We claim that each $R_\tau(n)$ is relatively discrete in $(X,\tau)$.
Suppose not, then there are $n\in \w$,
$p\in R_\tau(n)$ and a sequence $\{p_k: k\in \w\}\subseteq R_\tau(n)$
that converges to $p$ with $p_{k+1} < p_k$ for each $k$.
We may assume that $(p, p_0)\cap D_n=\0$.
Since $[p_k, x_{p_k})\cap D_n \not = \0$,
we have $p_0\in [p_k,x_{p_k})\subseteq G(p_k)\subseteq [p_k,\to )$ for each $k$,
which contradicts Lemma \ref{rightandleft}.

Hence $R_\tau(n)$ is relatively discrete for each $n$,
which shows that $R_\tau$ is $\sigma$-relatively discrete.
Similarly $L_\tau$ is $\sigma$-relatively discrete, which completes the proof.
\end{proof}

Since every monotone open locally-finite operator is both
a monotone metacompactness operator and a
monotone open closure-preserving operator,
Theorem~\ref{metaparaGO}, Theorem~\ref{monometa},
and Proposition~\ref{underLOTS}
allow us to extend Theorem~\ref{monometa} as follows.

\begin{theorem}\label{listmore}
Let $(X, \tau ,<)$ be a GO-space whose underlying
LOTS $(X,\lambda,<)$ has a $\sigma$-closed-discrete dense subset.
Then the following are equivalent:

(i) X has a monotone open locally-finite operator,

(ii) X is monotonically metacompact,

(iii) X has a monotone open closure-preserving operator,

(iv) X has a monotone closure-preserving operator.
\end{theorem}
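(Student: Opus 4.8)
The plan is to prove Theorem~\ref{listmore} by verifying a cycle of implications that links all four conditions, drawing on the three major results that precede it. The overall strategy exploits the fact that a monotone open locally-finite operator is simultaneously a monotone metacompactness operator and a monotone open closure-preserving operator, while a monotone open closure-preserving operator is in particular a monotone closure-preserving operator.

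\begin{proof}
We show $(i)\To(ii)\To(i)$ and $(i)\To(iii)\To(iv)\To(i)$, which together establish the equivalence of all four conditions.

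First, $(i)\To(ii)$: a monotone open locally-finite operator $r$ produces, for each open cover $\mathcal U$, a locally-finite open refinement $r(\mathcal U)$; every locally-finite family is point-finite, so $r$ is in particular a monotone metacompactness operator, and thus $X$ is monotonically metacompact. The reverse implication $(ii)\To(i)$ is exactly Theorem~\ref{metaparaGO}, which applies since $X$ is a GO-space.

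Next, $(i)\To(iii)$: again let $r$ be a monotone open locally-finite operator. For each open cover $\mathcal U$, the family $r(\mathcal U)$ is locally-finite and open, hence closure-preserving and open, so $r$ witnesses that $X$ has a monotone open closure-preserving operator. The implication $(iii)\To(iv)$ is immediate, since any monotone open closure-preserving operator is in particular a monotone (not necessarily open) closure-preserving operator; one simply forgets the openness requirement.

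Finally, $(iv)\To(i)$ is the crux and the place where the preceding machinery is assembled. Suppose $X$ has a monotone closure-preserving operator $r$. Since the underlying LOTS $(X,\lambda,<)$ has a $\sigma$-closed-discrete dense subset, Proposition~\ref{underLOTS} applies and yields that $R_\tau\cup L_\tau$ is $\sigma$-closed-discrete as a subspace of $(X,\tau)$. By the equivalence $(a)\iff(c)$ of Theorem~\ref{monometa}, the $\sigma$-closed-discreteness of $R_\tau\cup L_\tau$ in $(X,\tau)$ implies that $(X,\tau)$ is monotonically metacompact. Then Theorem~\ref{metaparaGO} gives that $X$ has a monotone open locally-finite operator, establishing $(i)$ and closing the cycle.
\end{proof}

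I expect the only genuinely substantive link in this chain to be $(iv)\To(i)$, and all of its difficulty has already been isolated into Proposition~\ref{underLOTS} and Theorem~\ref{monometa}; the remaining implications are formal consequences of the inclusions among the four operator classes. The main point to be careful about is to route the argument through the correct characterization in Theorem~\ref{monometa} — specifically the equivalence of monotone metacompactness with $\sigma$-closed-discreteness of $R_\tau\cup L_\tau$ in $(X,\tau)$ — so that the hypothesis on the $\sigma$-closed-discrete dense subset of the underlying LOTS is invoked exactly where each cited result requires it.
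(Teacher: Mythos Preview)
Your proof is correct and follows essentially the same approach as the paper: the paper's proof is a one-sentence remark that the trivial implications among the four operator classes, together with Theorem~\ref{metaparaGO}, Theorem~\ref{monometa}, and Proposition~\ref{underLOTS}, yield the result, and you have simply written out the cycle of implications explicitly using exactly those ingredients.
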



We do not know if the requirement in Theorem \ref{listmore} that $(X,\l,<)$
has a $\sigma$-closed discrete dense subset is essential.

\begin{qu}\label{q2.8}
If a GO-space $X$ has a monotone (open or otherwise) closure-preserving
operator, must it be monotonically metacompact?
\end{qu}

If the answer to Question \ref{q2.8} is yes, then Theorems 3.3 and 3.8
in the next section would follow from results in \cite{BHL}, \cite{CG}, and \cite{PL}.

\begin{qu} Can one add protometrizable to the list of equivalent conditions
in Theorem \ref{listmore}?
\end{qu}

The following is a variation of our Question 2.6(d) in \cite{PP}
(where ``monotone locally-finite operator'' meant
``monotone open locally-finite operator'').

\begin{qu}\label{maxlfbase}
Does every LOTS $X$ with a monotone
locally-finite (open) operator have
a N\"otherianly locally-finite base (as defined in \cite{PP})?
\end{qu}

\section{Metrization Theorems}\label{esep}

Faber's metrization theorem for GO-spaces was the key to results in \cite{BHL} and \cite{PL} on
the metrization of  monotonically countably metacompact GO-spaces with a $\sigma$-closed-discrete dense subset.

\begin{theorem}\cite[Theorem 3.10]{F} Suppose $(X, \tau, <)$ is a GO-space and $Y \subseteq X$. Then the subspace $(Y, \tau_Y )$ is metrizable if
and only if
\begin{itemize}
\item[(a)] $(Y, \tau_Y )$ has a $\sigma$-closed-discrete dense subset, and
\item[(b)] $R_{\tau_Y}\cup L_{\tau_Y}$ is $\sigma$-closed-discrete in the subspace $(Y, \tau _Y )$.
\end{itemize}
\end{theorem}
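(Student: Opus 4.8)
The plan is to prove both implications by reducing metrizability to the Nagata--Smirnov criterion: since every GO-space is regular and $T_1$ (indeed monotonically normal, hence collectionwise normal), the subspace $(Y,\tau_Y)$ is metrizable if and only if it has a $\sigma$-locally-finite base. Thus necessity amounts to extracting (a) and (b) from a compatible metric, while sufficiency amounts to building such a base out of the dense set supplied by (a) together with the one-sided points controlled by (b).

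For necessity I would argue directly in a compatible metric $d$ on $Y$. Condition (a) is standard: for each $n$ a maximal $\tfrac1n$-separated set $D_n$ is closed-discrete, and $\bigcup_n D_n$ is dense, so $Y$ has a $\sigma$-closed-discrete dense subset. For (b), fix $x\in R_{\tau_Y}$; since $[x,\to)$ is $\tau_Y$-open there is $\varepsilon_x>0$ with the ball $B(x,\varepsilon_x)$ disjoint from $(\from,x)$. Setting $R_{\tau_Y}(n)=\{x\in R_{\tau_Y}: B(x,\tfrac1n)\cap(\from,x)=\emptyset\}$, any two points $x<x'$ of $R_{\tau_Y}(n)$ satisfy $x\in(\from,x')$, so $x\notin B(x',\tfrac1n)$ and $d(x,x')\ge \tfrac1n$; hence each $R_{\tau_Y}(n)$ is $\tfrac1n$-separated, therefore closed-discrete. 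Since $R_{\tau_Y}=\bigcup_n R_{\tau_Y}(n)$, and symmetrically for $L_{\tau_Y}$, the set $R_{\tau_Y}\cup L_{\tau_Y}$ is $\sigma$-closed-discrete.

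For sufficiency, write $D=\bigcup_n D_n$ for the dense set of (a) and $F_n=D_0\cup\cdots\cup D_n$ (still closed-discrete). I would first note that (a) forces first-countability: for $y\in Y$ and each $n$ one has $\sup(D_n\cap(\from,y))<y$ whenever this set is nonempty (else $D_n$ would accumulate at $y$), and the resulting countably many points witness a countable left, and symmetrically right, character at $y$. I then assemble a base from three pieces. The first, $\mathcal B_{\mathrm{comp}}$, consists for each $n$ of the convex components of the open set $Y\ms F_n$; since $F_n$ is closed-discrete each of its points is isolated in $F_n$, so every point of $Y$ has a convex neighborhood meeting at most two components, making $\mathcal B_{\mathrm{comp}}$ $\sigma$-locally-finite, and density of $D$ makes these components shrink to any two-sided point, i.e.\ to any $y\in E_{\tau_Y}\ms D$. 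The second, $\mathcal B_D$, is a $\sigma$-discrete system of shrinking convex neighborhood bases at the points of $D$: for fixed $n$ the closed-discrete set $D_n$ admits, by collectionwise normality, a discrete open expansion, inside which first-countability yields a decreasing convex base at each point (half-open when the point lies in $R_{\tau_Y}\cup L_{\tau_Y}$). The third, $\mathcal B_{R\cup L}$, treats the one-sided points outside $D$: writing $R_{\tau_Y}\cup L_{\tau_Y}=\bigcup_m P_m$ with each $P_m$ closed-discrete by (b), the same discrete-expansion-plus-shrinking construction produces a $\sigma$-discrete family of half-open convex neighborhoods that is a local base at each such point. The union $\mathcal B=\mathcal B_{\mathrm{comp}}\cup\mathcal B_D\cup\mathcal B_{R\cup L}$ is then a $\sigma$-locally-finite base, and Nagata--Smirnov gives metrizability.

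The main obstacle is the one-sided points, and this is exactly where (b) is indispensable. The convex components in $\mathcal B_{\mathrm{comp}}$ are governed by the order alone, so at a point $x\in R_{\tau_Y}$ whose immediate left neighbor is an actual point of $Y$ (as in the double-arrow space) the component containing $x$ reaches past $x$ to the left and never shrinks to the half-open sets $[x,g_x)$ that a base at $x$ requires; the interval components therefore serve only the two-sided points, and the one-sided points must be fed their half-open neighborhoods separately. Arranging those half-open neighborhoods into a $\sigma$-locally-finite family is possible precisely because $R_{\tau_Y}\cup L_{\tau_Y}$ is $\sigma$-closed-discrete, which is the content of (b); without it (again the double arrow) no such family exists and the space fails to be metrizable. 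The remaining work is bookkeeping: checking that $\mathcal B_{\mathrm{comp}}$, $\mathcal B_D$, and $\mathcal B_{R\cup L}$ jointly give a local base at every point according to its type in the partition $I_{\tau_Y}\cup R_{\tau_Y}\cup L_{\tau_Y}\cup E_{\tau_Y}$, and confirming that local finiteness of the components follows, as above, from the closed-discreteness of each $F_n$.
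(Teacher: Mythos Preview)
The paper does not prove this theorem: it is Faber's metrization theorem, quoted from \cite[Theorem 3.10]{F} and used as a black box to derive Theorem~\ref{gometric}. There is therefore no ``paper's own proof'' to compare your proposal against.

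That said, your outline is a reasonable self-contained argument along classical lines and is essentially correct. A couple of places could be tightened. In the first-countability step you write $\sup(D_n\cap(\from,y))<y$, but this supremum need not exist in $Y$; what you really use (and what closed-discreteness of $D_n$ gives) is a point $a_n<y$ with $(a_n,y)\cap D_n=\emptyset$, and to get a \emph{single} index $n$ working on both sides you should run this with the cumulative sets $F_n=D_0\cup\cdots\cup D_n$ rather than with $D_n$. In $\mathcal B_{\mathrm{comp}}$ you correctly note that the convex components of $Y\setminus F_n$ are pairwise disjoint, hence locally finite; the only nontrivial point is at $p\in F_n$, where a convex neighborhood isolating $p$ in $F_n$ meets at most the two adjacent components. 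Finally, your argument that isolated points automatically lie in $D$ (since $\{y\}$ is open and $D$ is dense) is what ensures every point type is covered by one of the three families. With these small clarifications the Nagata--Smirnov route goes through.
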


By Faber's metrization theorem, to prove that
a GO-space $(X, \tau, <)$ with a $\sigma$-closed discrete dense subset
is metrizable it suffices to show that $R_\tau\cup L_\tau$ is $\sigma$-closed-discrete.

\begin{pr}\label{rldiscrete}
Suppose $(X,\tau,<)$ is a GO-space with a $\sigma$-closed-discrete dense subset. If $X$ has a monotone closure-preserving operator $r$ then $R_\tau\cup L_\tau$ is $\sigma$-closed discrete.
\end{pr}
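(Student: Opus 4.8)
The plan is to reduce Proposition~\ref{rldiscrete} to the already-proved Proposition~\ref{underLOTS}. The difference between the two statements is the hypothesis: Proposition~\ref{underLOTS} assumes the \emph{underlying LOTS} $(X,\lambda,<)$ has a $\sigma$-closed-discrete dense subset, whereas here we are given only that the \emph{GO-space} $(X,\tau,<)$ itself has one. Since $\tau$ is finer than $\lambda$, a set that is dense in $(X,\tau)$ need not be dense in $(X,\lambda)$, so we cannot apply Proposition~\ref{underLOTS} directly. The main task is therefore to bridge this gap between the two density assumptions.

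First I would write $D=\bigcup\{D_n:n\in\mathbb N\}$ for a dense subset of $(X,\tau)$, each $D_n$ being closed-discrete in $(X,\tau)$. The points of $R_\tau\cup L_\tau$ are exactly the points having a one-sided $\tau$-neighborhood that is not a $\lambda$-neighborhood, so they are intimately tied to where the two topologies differ. The key observation is that $I_\tau$ (the isolated points of $(X,\tau)$) together with $R_\tau\cup L_\tau$ accounts for all points where $\tau$ strictly refines $\lambda$; at points of $E_\tau$ the two topologies have the same local character. Since $I_\tau$ is open and $\tau$-discrete, it is automatically $\sigma$-closed-discrete in $(X,\tau)$, so it suffices to handle $R_\tau\cup L_\tau$.

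The strategy I expect to work is to show that $D\cup I_\tau$, or a suitable enlargement of $D$, is dense in the underlying LOTS $(X,\lambda,<)$, thereby recovering the hypothesis of Proposition~\ref{underLOTS}. Concretely, a $\lambda$-open convex set $(a,b)$ that misses $D$ can contain no non-isolated points of $(X,\tau)$ in its $\tau$-interior, since $D$ is $\tau$-dense; so any point of $(a,b)$ not in $D$ must be $\tau$-isolated, i.e.\ lie in $I_\tau$. This forces $(a,b)\subseteq D\cup I_\tau$, which shows $D\cup I_\tau$ is $\lambda$-dense. Writing $I_\tau$ itself as a union of its one-point (hence closed-discrete) pieces if needed, we obtain that $(X,\lambda,<)$ has a $\sigma$-closed-discrete dense subset.

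With the LOTS density hypothesis now established, I would simply invoke Proposition~\ref{underLOTS} applied to the given operator $r$ to conclude that $R_\tau\cup L_\tau$ is $\sigma$-closed-discrete as a subspace of $(X,\tau)$, which is exactly the desired conclusion. The hard part will be the density transfer argument, namely verifying rigorously that a $\lambda$-open set disjoint from the $\tau$-dense set $D$ consists entirely of $\tau$-isolated points; the subtlety is handling endpoints and gaps so that one genuinely produces a $\lambda$-dense $\sigma$-closed-discrete set rather than merely a dense set, since Faber's theorem and Proposition~\ref{underLOTS} both require the closed-discrete structure. Once that transfer is in hand, the proposition follows immediately.
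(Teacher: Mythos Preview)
Your reduction to Proposition~\ref{underLOTS} does not go through, because the transfer lemma you are relying on is false. First, the density part is a red herring: since $\lambda\subseteq\tau$, any $\tau$-dense set $D$ is automatically $\lambda$-dense (a nonempty $\lambda$-open interval is $\tau$-open and so meets $D$), so there is nothing to prove there and no need to adjoin $I_\tau$. The real obstruction is the closed-discrete structure: a set that is closed-discrete in $(X,\tau)$ need not be closed-discrete in the coarser topology $(X,\lambda)$, and your fix of ``writing $I_\tau$ as a union of its one-point pieces'' fails whenever $I_\tau$ is uncountable. Concretely, take $X=\omega_1$ with $\tau$ the discrete topology. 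Then $(X,\tau)$ trivially has a $\sigma$-closed-discrete dense subset (itself) and a monotone closure-preserving operator, but the underlying LOTS $(\omega_1,\lambda)$ has \emph{no} $\sigma$-closed-discrete dense subset at all (closed discrete subsets of $\omega_1$ are countable, while dense subsets are cofinal and hence uncountable). So Proposition~\ref{underLOTS} simply cannot be invoked, and your proposed bridge collapses.

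The paper avoids this by not attempting the transfer. Instead it observes (citing \cite{HLP}) that a GO-space with a $\sigma$-closed-discrete dense subset is perfect, and then uses \cite[Lemma~2.1]{BHL} to reduce the conclusion to showing that $R_\tau\cup L_\tau$ is merely $\sigma$-\emph{relatively} discrete in $(X,\tau)$. At that point the combinatorial core of the proof of Proposition~\ref{underLOTS} --- choosing $G(p)$, $x_p$, $n(p)$ and applying Lemma~\ref{rightandleft} --- goes through verbatim, because that argument only ever uses that $D$ meets each nonempty $\tau$-open interval $(p,x_p)$, which is exactly $\tau$-density. In short: reuse the \emph{internal} argument of Proposition~\ref{underLOTS}, not its statement.
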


\begin{proof}
Let $D =\bigcup \{D_n: n\in\mathbb{N}\}$ be dense
in $(X,\tau)$ where each $D_n$ is closed-discrete.
Then $X$ is perfect (and first countable) \cite[Proposition 3.1]{HLP}.
By \cite[Lemma 2.1]{BHL}, it is enough to show that
$R_\tau\cup L_\tau$ is $\sigma$-relatively discrete.

The rest of the proof of Proposition~\ref{underLOTS}
works here without modifications.
\end{proof}

The following theorem immediately follows from the above
proposition and Faber's metrization theorem.

\begin{theorem}\label{gometric}
Suppose $(X, \tau, <)$ is a GO-space with a $\sigma$-closed-discrete dense subset. If $X$ has a
monotone closure-preserving operator, then $(X, \tau )$ is metrizable.
\end{theorem}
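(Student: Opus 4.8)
The plan is to deduce the theorem directly from Faber's metrization theorem by taking $Y = X$ in its statement. Faber's theorem characterizes metrizability of a GO-subspace through two conditions, so the whole argument reduces to verifying conditions (a) and (b) for the full space $(X,\tau)$. Once both hold, metrizability is immediate.

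Condition (a), that $(X,\tau)$ has a $\sigma$-closed-discrete dense subset, is one of the standing hypotheses of the theorem, so there is nothing to check. For condition (b), that $R_\tau\cup L_\tau$ is $\sigma$-closed-discrete in $(X,\tau)$, I would simply invoke Proposition~\ref{rldiscrete}: its hypotheses are that $X$ is a GO-space with a $\sigma$-closed-discrete dense subset carrying a monotone closure-preserving operator, which are exactly our assumptions, and its conclusion is precisely the $\sigma$-closed-discreteness of $R_\tau\cup L_\tau$. With (a) given and (b) supplied by Proposition~\ref{rldiscrete}, Faber's theorem yields that $(X,\tau)$ is metrizable.

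There is no genuine obstacle at the level of this theorem itself; it is a one-line corollary, and all the real content has already been packaged into Proposition~\ref{rldiscrete}. The substantive difficulty lives there: establishing that $R_\tau\cup L_\tau$ is $\sigma$-closed-discrete. That proposition first reduces, using perfectness and first countability together with \cite[Lemma 2.1]{BHL}, to proving $\sigma$-relative discreteness, and then runs the counting argument of Proposition~\ref{underLOTS}, which rests on the closed convex operator $\bar r$ produced in Lemma~\ref{GOopenenough} and on the point-finiteness furnished by Lemma~\ref{rightandleft}. Thus the only care needed here is to confirm that the hypotheses of Proposition~\ref{rldiscrete} and of Faber's theorem are matched to the setting at hand, after which the deduction is routine.
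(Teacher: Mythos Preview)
Your proposal is correct and matches the paper's own argument exactly: the paper states that the theorem ``immediately follows from the above proposition and Faber's metrization theorem,'' which is precisely the deduction you outline. Your additional remarks locating the substantive work in Proposition~\ref{rldiscrete} (and its supporting Lemmas~\ref{GOopenenough} and~\ref{rightandleft}) are accurate and appropriate.
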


By means of a different proof, the first author \cite{P1}
has shown that the Sorgenfrey line does not have
a monotone closure-preserving operator. Since
the Sorgenfrey line is separable and nonmetrizable,
it will not a have monotone closure-preserving
operator by Theorem \ref{gometric}.
(One could also use Theorem~\ref{listmore}.)

\begin{co} The Sorgenfrey line has no
monotone closure-preserving operator.
\end{co}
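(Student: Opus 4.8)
The plan is to deduce the corollary directly from the two metrization results already established, since the Sorgenfrey line $S$ is a GO-space and the only work is to verify the hypotheses. First I would recall that $S$ is the real line with the topology generated by the half-open intervals $[a,b)$; it is a GO-space whose underlying LOTS is the usual order topology on $\mathbb{R}$. The key structural facts are that $S$ is separable (the rationals are dense) and that $S$ is not metrizable (it is separable but not second countable, being right-separated in a way that forces uncountable discrete-like behavior, or simply because every metrizable separable space is second countable while $S$ has uncountable cellularity in a suitable sense — concretely, $S$ is Lindel\"of and separable but not metrizable).

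Next I would observe that a separable space automatically has a $\sigma$-closed-discrete (indeed countable) dense subset, namely any countable dense set written as a union of singletons. This is exactly hypothesis (a) needed to invoke Theorem~\ref{gometric}. So I would argue by contraposition: suppose toward a contradiction that $S$ possessed a monotone closure-preserving operator. Then $S$ is a GO-space with a $\sigma$-closed-discrete dense subset and a monotone closure-preserving operator, and Theorem~\ref{gometric} would force $(S,\tau)$ to be metrizable.

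The final step is to contradict metrizability. Since $S$ is separable but not metrizable, this contradiction establishes that no such operator exists. Alternatively, as the excerpt notes, one can route through Theorem~\ref{listmore}: a monotone closure-preserving operator on a GO-space with a $\sigma$-closed-discrete dense subset is equivalent to monotone metacompactness and hence to metrizability in the separable setting, but the cleaner path is the direct appeal to Theorem~\ref{gometric}.

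Honestly, there is no genuine obstacle here: the corollary is a one-line consequence of Theorem~\ref{gometric} once one records that separability supplies the $\sigma$-closed-discrete dense subset. The only point requiring a moment's care is making explicit why $S$ fails to be metrizable despite being separable, but this is a standard fact (a separable metrizable space is second countable, whereas the Sorgenfrey line is not). Thus the entire argument is the contrapositive of Theorem~\ref{gometric} applied to $S$, and I would present it in a single short paragraph.
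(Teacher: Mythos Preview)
Your proposal is correct and follows exactly the paper's own argument: the Sorgenfrey line is separable (hence has a $\sigma$-closed-discrete dense subset) and nonmetrizable, so Theorem~\ref{gometric} rules out a monotone closure-preserving operator, with Theorem~\ref{listmore} noted as an alternative route. The only quibble is your aside about ``uncountable cellularity,'' which is not quite right for the Sorgenfrey line (it is ccc); the clean reason it fails to be metrizable is the one you also give, that a separable metrizable space must be second countable.
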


Every
space $X$ with a (monotone or not) closure-preserving operator must be
paracompact. In particular $\omega_1$ with the order topology
does not have a monotone closure-preserving operator.
The next theorem shows that the compact LOTS
$\omega_1+1$ has no monotone closure-preserving operator either.

\begin{theorem}\label{1stcountable}
Let $X$ be a compact LOTS with a monotone closure-preserving operator $r$.
Then $X$ is first countable.
\end{theorem}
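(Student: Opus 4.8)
The plan is to prove the contrapositive: if a point $p \in X$ has uncountable character, then $X$ cannot admit a monotone closure-preserving operator. In a compact LOTS, a point of uncountable character must have an uncountable decreasing or increasing net converging to it from one side; without loss of generality, suppose there is a strictly decreasing transfinite sequence $\{y_\alpha : \alpha < \kappa\}$ (with $\kappa$ regular uncountable, say $\kappa = \mathrm{cf}(\chi(p,X))$) approaching $p$ from the right, with no point strictly between $p$ and all the $y_\alpha$ other than the limit behavior forcing $p = \inf_\alpha y_\alpha$. I would first use compactness and the LOTS structure to extract such a well-ordered decreasing sequence, noting that at $p$ the point is not isolated on the right, so $p \in R_\tau \cup E_\tau$ and $p \in \cl{(p,\to)}$.

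The core idea is to imitate the mechanism of Lemma~\ref{rightandleft} and Proposition~\ref{underLOTS}, but now transfinitely. For each $\alpha < \kappa$ I would consider the two-element open cover $\mathcal{U}_\alpha = \{(\from, y_\alpha), [y_\alpha, \to)\}$ and apply the operator $\bar r$ from Lemma~\ref{GOopenenough}, choosing $G_\alpha \in \bar r(\mathcal{U}_\alpha)$ with $y_\alpha \in G_\alpha \subseteq [y_\alpha, \to)$. I would then form the common refinement-style cover $\mathcal{U} = \bigcup_{\alpha < \kappa} \mathcal{U}_\alpha$ (adding $(\from, p)$ or a suitable left piece to make it a genuine open cover of the compact space, but keeping the right-hand pieces intact). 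Since each $\mathcal{U}_\alpha$ refines $\mathcal{U}$, monotonicity gives $\bar r(\mathcal{U}_\alpha)$ refining $\bar r(\mathcal{U})$, so there are $H_\alpha \in \bar r(\mathcal{U})$ with $y_\alpha \in G_\alpha \subseteq H_\alpha \subseteq [y_{\beta_\alpha}, \to)$ for some $\beta_\alpha \geq \alpha$. Because the $H_\alpha$ all lie to the right of $p$ with infima approaching $p$, and $p \notin H_\alpha$ for any $\alpha$, the family $\{H_\alpha : \alpha < \kappa\}$ (or an uncountable subfamily) fails to be closure-preserving at $p$: we would have $p \in \cl{\bigcup_\alpha H_\alpha}$ but $p \notin \bigcup_\alpha \cl{H_\alpha}$, contradicting that $\bar r(\mathcal{U})$ is closure-preserving.

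The main obstacle I anticipate is ensuring the uncountable family of $H_\alpha$ genuinely witnesses a closure-preserving failure rather than collapsing. Two difficulties must be handled: first, the $H_\alpha$ might repeat or the infima $l_{H_\alpha}$ might not decrease monotonically to $p$, so I would need a pressing-down or pigeonhole argument (using regularity of $\kappa$) to thin out to a subfamily whose left endpoints strictly decrease to $p$ while each $H_\alpha$ still excludes $p$. Second, I must verify $p \notin \cl{H_\alpha}$ for the thinned family — this follows because each $H_\alpha$ is contained in $[y_{\beta_\alpha}, \to)$ with $y_{\beta_\alpha} > p$, and in the LOTS $p$ is not in the closure of any single such right-ray-bounded convex set, though care is needed when $p \in E_\tau$ versus $p \in R_\tau$.

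Finally, I would observe that compactness is used precisely to guarantee the convergent monotone sequence exists (a point of uncountable character in a compact LOTS has a one-sided well-ordered neighborhood base of uncountable cofinality) and that $X$ has no other escape, so first countability of $X$ follows. The symmetric left-sided case is handled identically using part (b) of Lemma~\ref{GOopenenough}. I expect the transfinite closure-preservation contradiction to parallel the countable argument in Lemma~\ref{rightandleft} almost verbatim, with the only genuinely new work being the extraction of the decreasing $\kappa$-sequence and the pressing-down thinning.
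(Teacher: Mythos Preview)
Your plan has two genuine gaps that the paper's proof avoids by taking a rather different route.

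First, in a LOTS the set $[y_\alpha,\to)$ is open only when $y_\alpha$ has an immediate predecessor, so your two-element ``covers'' $\mathcal U_\alpha=\{(\leftarrow,y_\alpha),[y_\alpha,\to)\}$ need not be open covers at all. In Lemma~\ref{rightandleft} this works because the $y_n$ are assumed to lie in $R_\tau$; you cannot assume that about an arbitrary transfinite sequence converging to a point of uncountable character.

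Second, and more seriously, your claim that $H_\alpha\subseteq[y_{\beta_\alpha},\to)$ for some $\beta_\alpha\ge\alpha$ is unjustified. The elements of $\mathcal U=\bigcup_\alpha\mathcal U_\alpha$ include the left pieces $(\leftarrow,y_\gamma)$, and nothing prevents $H_\alpha\subseteq(\leftarrow,y_\gamma)$ for some $\gamma<\alpha$; such an $H_\alpha$ may well contain $p$, so no closure-preserving failure follows. In Lemma~\ref{rightandleft} this case is ruled out precisely because one first assumes point-finiteness fails and obtains a common point $q\ge y_n$ lying in every $G_n$; since $q\notin(\leftarrow,y_m)$, the containing $H_n$ is forced into a right piece. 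You have no such common point, and your proposed pressing-down thinning does not recover it: on the stationary set where $H_\alpha\subseteq(\leftarrow,y_\gamma)$ with fixed $\gamma$, the $H_\alpha$ can all contain $p$, yielding no contradiction.

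The paper's argument is organized quite differently. It works from the \emph{left} of the bad point $z$, using covers $\mathcal U(x)=\{(x,\to)\}\cup\{(\leftarrow,y):y<z\}$ (always open, and nested so that $\mathcal U(t)$ refines $\mathcal U(x)$ when $x<t$). Closure-preservingness of $r(\mathcal U(x_n))$ is used at each stage to bound the right endpoints of members containing $x_n$, allowing an inductive choice of $x_{n+1}$ beyond that bound. Non-first-countability guarantees $t=\sup_n x_n<z$, and then the single cover $\mathcal U(t)$ produces sets $C_n\ni x_n$ whose right endpoints are trapped below $x_{n+1}$, giving the closure-preserving failure at $t$. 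Only a countable construction is needed; no transfinite sequence, no pressing-down.
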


\begin{proof}
If not then we may assume that there is $z \in X$
such that $z\in\cl{(\from,z)}$,
but if $x_n < z$ for each $n\in\omega$ then
$\sup_{n\in\omega}x_n <z$.

For each $x < z$ let $\mathcal{U}(x) =
\{(x,\to)\}\cup\{(\from,y):y<z \}$.
If $x < t < z$ then
$\mathcal{U}(t)$ refines $\mathcal{U}(x)$.
Fix $x_0 < z$ and let $\mathcal{A}(x_0)=
\{A\in r(\mathcal{U}(x_0)) : x_0\in A\}$.
If $A\in \mathcal{A}(x_0)$, then
$A\subseteq (\from,y)$
for some $y < z$.
Then $u_A=\sup(A)\le y < z$.
Also, $\sup\{u_A:
A\in \mathcal{A}(x_0)\}=u_B < z$
for some $B\in \mathcal{A}(x_0)$.
Indeed, otherwise we could take
$A_k\in \mathcal{A}(x_0)$ with
$u_{A_k}<u_{A_{k+1}}$ (for all $k\in\omega$)
and then the family $\{A_k : k\in \omega \}$
would not be closure-preserving at
$\sup\{u_{A_k}:k\in \omega \}$.

By induction pick $x_n < z$ with
$x_{n+1} > \sup\{u_A :
A\in\mathcal{A}(x_n)\}$,
where
$\mathcal{A}(x_n)=\{A\in r(\mathcal{U}(x_n)):
x_n\in A\}$.
Clearly $x_n < x_{n+1}$.
Let $t = \sup_{n\in\omega}x_n$, then $t < z$.
For each $n$ pick $C_n\in r(\mathcal{U}(t))$
such that $x_n\in C_n$.
Since $\mathcal U(t)$ refines each
$\mathcal U(x_n)$, there is
$A_n\in \mathcal{A}(x_n)$ with
$C_n\subseteq A_n$.  Hence
$x_n\leq \sup(C_n)\leq \sup(A_n) < x_{n+1}$.
It follows that the family $\{C_n : n\in \omega \}$
is not closure-preserving at $t$, a contradiction.
\end{proof}

We do not know if in the above theorem we may conclude that $X$ is metrizable.
We will show that if the monotone closure-preserving operator $r$ is open, then
the answer is yes.
Again, we modify the monotone operator.

\begin{lemma}\label{convex}
Suppose $(X,\tau,<)$ is a GO-space with
a monotone closure-preserving operator $r$.
Then $X$ has a convex monotone closure-preserving operator $r_1$,
such that 
if $r$ is an open operator, then so is $r_1$.
\end{lemma}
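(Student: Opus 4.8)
The plan is to produce a convex monotone closure-preserving operator $r_1$ from the given operator $r$ by the same device used in the proof of Lemma~\ref{GOopenenough}, namely replacing each set in a refined cover by its convex hull. First I would, given any open cover $\mathcal{U}$, form the intermediate cover $c(\mathcal{U}) = \{C \subseteq X : C \text{ is an open convex subset of } X \text{ and } [C] \subseteq U \text{ for some } U \in \mathcal{U}\}$ exactly as before; this is an open refinement of $\mathcal{U}$ and $c$ is monotone. Then I would set $r_1(\mathcal{U}) = \{\conv(A) : A \in r(c(\mathcal{U}))\}$, taking the convex hull of each member of $r(c(\mathcal{U}))$ rather than its closed convex hull $[A]$. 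The point of using $\conv(A)$ instead of $[A]$ is precisely to preserve openness: as the excerpt notes, if $A$ is open then $\conv(A)$ is open, so when $r$ is an open operator each $r(c(\mathcal{U}))$ consists of open sets and hence so does $r_1(\mathcal{U})$.

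Next I would verify the three defining properties of a convex monotone closure-preserving operator. Convexity is immediate since each $\conv(A)$ is convex. That $r_1(\mathcal{U})$ refines $\mathcal{U}$ follows as in Lemma~\ref{GOopenenough}: if $A \in r(c(\mathcal{U}))$ then $A \subseteq C \subseteq [C] \subseteq U$ for some $C \in c(\mathcal{U})$ and $U \in \mathcal{U}$, whence $\conv(A) \subseteq \conv(C) = C \subseteq U$ because $C$ is already convex. Monotonicity of $r_1$ is inherited from the monotonicity of $c$ and of $r$ together with the fact that $A \subseteq B$ implies $\conv(A) \subseteq \conv(B)$: if $\mathcal{U}$ refines $\mathcal{V}$ then $c(\mathcal{U})$ refines $c(\mathcal{V})$, so $r(c(\mathcal{U}))$ refines $r(c(\mathcal{V}))$, and applying $\conv$ preserves the refinement relation.

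The main obstacle, and the step I would treat most carefully, is showing that $r_1(\mathcal{U})$ is closure-preserving. This requires a short argument because $\conv(A)$ and $[A]$ can differ at their endpoints; however, they differ only by gap-points $l_A$ or $u_A$ that fail to lie in $A$, and one checks that $\overline{\conv(A)} = \overline{[A]}$ since $[A]$ is just $\conv(A)$ together with at most two limit points that already lie in the closure of $\conv(A)$. Consequently, for any subfamily $\mathcal{A} \subseteq r(c(\mathcal{U}))$ we have $\overline{\bigcup \{\conv(A) : A \in \mathcal{A}\}} = \overline{\bigcup \{[A] : A \in \mathcal{A}\}}$ and $\bigcup \{\overline{\conv(A)}\} = \bigcup \{\overline{[A]}\}$, so the closure-preserving property of $\bar r(\mathcal{U}) = \{[A] : A \in r(c(\mathcal{U}))\}$, already established in Lemma~\ref{GOopenenough}, transfers directly to $r_1(\mathcal{U})$. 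Alternatively, one can simply rerun the closure-preservation argument of Lemma~\ref{GOopenenough} verbatim, since that argument located any alleged witness point $p$ to a failure of closure-preservation as an infimum $\inf\{l_A\}$ (or supremum $\sup\{u_A\}$) and derived a contradiction with the closure-preservation of $r(c(\mathcal{U}))$ itself; that reasoning never used the inclusion of endpoints and so applies equally to the convex hulls. This completes the verification that $r_1$ is a convex monotone closure-preserving operator which is open whenever $r$ is.
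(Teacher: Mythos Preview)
Your definition of $r_1$ is exactly the paper's, and you correctly supply the routine verifications (refinement, monotonicity, openness) that the paper leaves to the reader; so the approach is the same.

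One caution about your closure-preservation step. Your first argument relies on the claim $\overline{\conv(A)}=\overline{[A]}$, but this can fail in a GO-space: if $l_A\notin A$ then $l_A$ has no immediate successor, yet in a GO-space $l_A$ may still lie in $I_\tau\cup L_\tau$ (think of an irrational endpoint on the Michael line), in which case $l_A\in[A]=\overline{[A]}$ while $l_A\notin\overline{\conv(A)}$. So the transfer from $\bar r$ to $r_1$ via equal closures does not go through in general.

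Your alternative---rerunning the argument of Lemma~\ref{GOopenenough} with $\conv(A)$ in place of $[A]$---is the right route and is what the paper intends. The only extra wrinkle is that a witness $p\notin\overline{\conv(A)}$ might a priori satisfy $p=l_A$ (or $p=u_A$) rather than $p<l_A$; but once you reduce, without loss of generality, to $p$ being a limit of $\bigcup\{\conv(A):A\in\mathcal A\}\subseteq(p,\to)$, you get $p\in R_\tau\cup E_\tau$, and then $p=l_A$ would force $p\in\overline{(l_A,u_A)}\subseteq\overline{\conv(A)}$, a contradiction. After that the proof proceeds exactly as before: $p=\inf\{l_A:A\in\mathcal A\}$, hence $p\in\overline{\bigcup\mathcal A}\setminus\bigcup\{\overline A:A\in\mathcal A\}$, contradicting closure-preservation of $r(c(\mathcal U))$.
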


\begin{proof}
Let $r_1(\mathcal{U}) = \{\conv(A) : A\in r(c(\mathcal U))\}$,
where $c$ is defined as in the proof of Lemma \ref{GOopenenough}.
The easy verification that $r_1$ is the desired monotone operator is left to the reader.
\end{proof}

\begin{lemma}\label{finite}
Suppose $(X,<)$ is a compact LOTS with
a monotone closure-preserving operator $r$.
Then $X$ has a convex monotone closure-preserving operator $r_2$
such that $r_2(\mathcal{U})$ is finite,
for every open cover $\mathcal{U}$. 
If $r$ is open, then so is $r_2$.
\end{lemma}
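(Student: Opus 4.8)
The plan is to build on Lemma~\ref{convex} and use compactness to pass from a convex closure-preserving refinement to a finite one, while carefully preserving openness and monotonicity. First I would start with the convex monotone closure-preserving operator $r_1$ from Lemma~\ref{convex}, so that for every open cover $\mathcal U$ the family $r_1(\mathcal U)$ consists of convex sets refining $\mathcal U$ (open if $r$ is open). The natural idea is to keep only the \emph{maximal} members of $r_1(\mathcal U)$ under set inclusion, exactly as was done with $r_2$ in the proof of Theorem~\ref{metaparaGO}; define $r_2(\mathcal U)=\{U\in r_1(\mathcal U): U$ is $\subseteq$-maximal in $r_1(\mathcal U)\}$. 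Since throwing away non-maximal sets does not disturb refinement or the closure-preserving property, and monotonicity is inherited because passing to maximal convex sets respects the refinement order, $r_2$ remains a convex monotone closure-preserving (open, if $r$ is) operator; what must still be checked is that $r_2(\mathcal U)$ covers $X$ and is \emph{finite}.

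For the covering and finiteness, I would exploit compactness together with the closure-preserving property. The key observation is that a closure-preserving family of convex sets in a LOTS cannot contain an infinite strictly $\subseteq$-increasing chain whose union has a limit point outside it, nor an infinite antichain of maximal convex sets accumulating at a point. Concretely, I would argue that the maximal convex members are pairwise ``order-separated'' enough that an infinite collection of them would produce a sequence of right endpoints (or left endpoints) converging to some $p\in X$ with $p$ in the closure of the union but in none of the individual closures, contradicting that $r_1(\mathcal U)$ is closure-preserving. This is the same endpoint-chasing device used in Theorem~\ref{1stcountable} and Lemma~\ref{rightandleft}: pick distinct maximal convex $A_k$, extract a monotone sequence of endpoints $u_{A_k}$ (or $l_{A_k}$), let $p$ be their supremum/infimum (which exists by compactness), and observe the family $\{A_k\}$ fails closure-preservation at $p$. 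To see $r_2(\mathcal U)$ still covers, I would note as in Theorem~\ref{metaparaGO} that point-finiteness of the maximal-endpoint chains means every set in $r_1(\mathcal U)$ sits inside a maximal one, so no point is lost.

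I expect the main obstacle to be ruling out an \emph{infinite} family of maximal convex sets cleanly, since maximality alone does not forbid many incomparable convex sets. The delicate point is that two maximal convex open sets may overlap, so ``maximal'' need not mean ``disjoint''; I would therefore need the closure-preserving hypothesis, not just convexity and maximality, to force finiteness. The careful step is to show that if $r_2(\mathcal U)$ were infinite then one could select a subfamily whose endpoints form a strictly monotone convergent sequence, which requires using compactness to guarantee convergence and using the structure of convex sets to guarantee that the limit witnesses a failure of closure-preservation. Once that contradiction is established, finiteness follows, and since all the earlier properties (convexity, refinement, monotonicity, openness, closure-preservation) are preserved under taking maximal elements, $r_2$ is the desired operator.
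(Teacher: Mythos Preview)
Your proposal is correct and follows essentially the same route as the paper: define $r_2(\mathcal U)$ as the $\subseteq$-maximal elements of $r_1(\mathcal U)$, use closure-preservation (together with compactness) to rule out infinite strictly $\subseteq$-increasing chains so that $r_2(\mathcal U)$ still covers, and then show finiteness by observing that an infinite family of maximal convex sets would yield a monotone sequence of endpoints whose limit (existing by compactness) witnesses a failure of closure-preservation. One small caution: in your covering step you appeal to ``point-finiteness'' as in Theorem~\ref{metaparaGO}, but here there is no point-finiteness hypothesis---the paper instead argues directly that an infinite strictly increasing chain $J_n\subset J_{n+1}$ in $r_1(\mathcal U)$ would fail to be closure-preserving at $\sup_n u_{J_n}$ or $\inf_n l_{J_n}$, and the paper organizes the finiteness argument by noting that $\subseteq$-maximality of convex sets makes $r_2(\mathcal U)$ totally ordered by the relation $\ll$ (one set reaches strictly further left or the other strictly further right), which is exactly what lets you extract your monotone endpoint sequence.
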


\begin{proof}
If $r_1$ is the convex monotone closure-preserving operator
operator defined in Lemma \ref{convex},
let $r_2(\mathcal{U})=\{U\in r_1(\mathcal{U}): U$
is maximal in $r_1(\mathcal{U})$ with respect to inclusion$\}$.
Every element of $r_1(\mathcal{U})$ is contained in
a $\subseteq$-maximal one, for otherwise we could find
a $\subseteq$-strictly increasing chain
$\mathcal J=\{J_n:n\in\omega\}\subseteq r_1(\mathcal{U})$ with
$J_n\subset J_{n+1}$ for all $n$, but then
$\mathcal J$ would not be closure-preserving at either
$\sup\{u_{J_n}:n\in\omega\}$ or at $\inf\{l_{J_n}:n\in\omega\}$.
(Since the $J_n$ are convex and
$\subset$-increasing, there are infinitely many $n$
for which either $l_{{J_{n+1}}}< l_{J_n}$ or $u_{{J_n}}< u_{J_{n+1}}$.)
Hence $r_2(\mathcal{U})$ covers $X$.

Given any nonempty (usually convex) $A,B\subseteq X$
define $A\ll B$ provided that
either there is $a\in A$ with $a<b$ for all $b\in B$, or there
is $b\in B$ with $a<b$ for all $a\in A$.
Since each element of $r_2(\mathcal{U})$ is convex and $\subseteq$-maximal it
follows that $\ll$ totally orders $r_2(\mathcal{U})$ (i.e. every two
distinct elements of $r_2(\mathcal{U})$ are $\ll$-comparable).
If $r_2(\mathcal{U})$ were infinite for some open cover $\mathcal U$
then we could find a family
$\mathcal I=\{I_n:n\in\omega\}\subseteq
r_2(\mathcal{U})\subseteq r_1(\mathcal{U})$ with
either $I_n\ll I_{n+1}$ for all $n$, or $I_{n+1}\ll I_n$ for all $n$.
In the former case $\mathcal I$
is not closure-preserving
at $\sup\{u_{I_n}:n\in\omega\}$, and in the latter case
at $\inf\{l_{I_n}:n\in\omega\}$. This contradiction shows that
$r_2(\mathcal{U})$ is finite for all open covers $\mathcal U$.
Clearly $r_2$ is monotone, and if $r$ is open, so is $r_2$.
\end{proof}

\begin{theorem}\label{compLOTSopen}
Suppose that $X$ is a compact LOTS with a
monotone open closure-preserving operator $r$.
Then $X$ is metrizable.
\end{theorem}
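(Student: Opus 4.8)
The plan is to use Lemma~\ref{finite} to replace the given operator by an \emph{open} operator producing \emph{finite} covers, to observe that such an operator is nothing but a monotone metacompactness operator, and then to invoke the Chase--Gruenhage metrization theorem for compact monotonically metacompact spaces. In other words, the whole point of the theorem is that, for a compact LOTS, openness is exactly what is needed to land inside the hypotheses of the known metrization result.

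Concretely, I would first feed the given monotone open closure-preserving operator $r$ into Lemma~\ref{finite} to obtain the operator $r_2$. Since $r$ is open, Lemma~\ref{finite} guarantees that $r_2$ is open as well, and by construction $r_2(\mathcal U)$ is a finite open cover of $X$ refining $\mathcal U$, with $r_2$ monotone. The one additional observation is trivial but decisive: a finite family is automatically point-finite. Hence $r_2$ assigns to each open cover $\mathcal U$ a point-finite open refinement $r_2(\mathcal U)$ covering $X$, and does so monotonically, which is precisely the definition of a monotone metacompactness operator. Therefore $X$ is monotonically metacompact. The argument then finishes by quoting the theorem of Chase and Gruenhage \cite{CG} that every compact monotonically metacompact space is metrizable, applied to $X$.

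I expect that there is no serious remaining obstacle once Lemma~\ref{finite} is in hand: the genuine work—extracting finite covers from a convex closure-preserving operator on a compact LOTS, where compactness and the closure-preserving property are both essential—has already been carried out there, and the step above is only a matter of matching definitions and citing \cite{CG}. The instructive point to emphasize is the sharp contrast with the non-open case of Theorem~\ref{1stcountable}: the same construction then yields finite but not open covers, so $r_2$ fails to be an open (hence a metacompactness) operator, and correspondingly only first countability, rather than metrizability, can be extracted. Thus the role of the openness hypothesis in this theorem is entirely to upgrade the finite closure-preserving operator to a monotone metacompactness operator.
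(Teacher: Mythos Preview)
Your argument is correct and follows essentially the same route as the paper: both apply Lemma~\ref{finite} to obtain a monotone open operator producing finite covers, and then cite a known metrization theorem. The only difference is cosmetic---the paper names the resulting property \emph{monotonically compact} and cites Gruenhage's theorem \cite{Gmc} (with \cite{CG} as a more general reference), whereas you weaken ``finite'' to ``point-finite'' to phrase it as monotone metacompactness before invoking Chase--Gruenhage \cite{CG}; either path works.
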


\begin{proof}
By Lemma \ref{finite}, $X$ is monotonically compact
(i.e. it has a monotone operator $r_2$ such that $r_2(\mathcal U)$
is a finite {\em open} refinement covering $X$,
for every open cover $\mathcal U$).
Hence $X$ is metrizable \cite[Theorem 4.1]{Gmc} (for LOTS). More generally, see \cite{CG,GmcH,Pmc}.
\end{proof}

If Question \ref{opencp} has a positive answer, then the answer to the following question would also be positive.

\begin{qu}\label{compactmet}
If $X$ is a compact LOTS with a monotone
closure-preserving operator $r$,
must $X$ be metrizable?
\end{qu}

If $E_\tau$ is empty, then we have the following partial answer
to Question \ref{opencp}.

\begin{theorem}\label{GOopen}
Suppose $(X,\tau,<)$ is a GO-space
with a monotone closure-preserving operator $r$.
Then there is a convex open monotone operator
$\mathring r$ such that $\mathring r(\mathcal U)$
is a closure-preserving family and
$X\setminus \cup\mathring r(\mathcal U)
\subseteq E_\tau$,
for every open cover $\mathcal U$.
If $E_\tau=\0$, then $\mathring r$
is a convex monotone open closure-preserving operator.
\end{theorem}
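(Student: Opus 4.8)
The plan is to start from the closed convex operator $\bar r$ furnished by Lemma~\ref{GOopenenough} and manufacture from it an \emph{open} convex operator by interiorizing each closed convex set, then argue that the only points we can lose in passing from closures to interiors are points of $E_\tau$. Concretely, for each open cover $\mathcal U$ I would set $\mathring r(\mathcal U) = \{\Int_\tau([A]) : A \in r(c(\mathcal U))\}$, or equivalently $\{\Int_\tau(G) : G \in \bar r(\mathcal U)\}$, where $\Int_\tau$ denotes interior in the GO-topology. Since each $[A]$ is closed and convex, its interior is convex and open, so $\mathring r(\mathcal U)$ is a convex open family; monotonicity of $\mathring r$ is inherited directly from the monotonicity of $\bar r$ (which was established in Lemma~\ref{GOopenenough}), because taking interiors is an order-preserving operation on refinements. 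Refinement of $\mathcal U$ is likewise inherited, since $\Int_\tau(G) \subseteq G \subseteq U$ for the appropriate $U \in \mathcal U$.

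The two substantive points are the coverage claim $X \setminus \cup \mathring r(\mathcal U) \subseteq E_\tau$ and the closure-preserving property. For coverage, fix $x \in X \setminus E_\tau$, so $x$ lies in $I_\tau \cup R_\tau \cup L_\tau$. If $x \in I_\tau$ it is isolated and belongs to the interior of any $G$ containing it. If $x \in R_\tau \cup E_\tau$ property~(a) of Lemma~\ref{GOopenenough} gives $g_x > x$ and $G \in \bar r(\mathcal U)$ with $[x,g_x) \subseteq G$; combining this with property~(b) when $x \in L_\tau$ (which yields $(h_x,x] \subseteq H$), I would check that $x$ is an interior point of the corresponding closed convex set. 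For $x \in R_\tau$ the set $[x,g_x)$ is a $\tau$-open neighborhood of $x$ contained in $G$, so $x \in \Int_\tau(G)$; symmetrically for $L_\tau$. Thus every non-$E_\tau$ point lands in some interiorized set, which is exactly the containment asserted.

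For the closure-preserving property of $\mathring r(\mathcal U)$, I would lean on the fact that $\bar r(\mathcal U)$ is already closure-preserving (Lemma~\ref{GOopenenough}) together with the observation that $\Int_\tau(G) \subseteq G = \cl_\tau(\Int_\tau(G))$ whenever $G$ is a closed convex set that is the closure of its own interior. The clean way to phrase this is: for any subfamily $\mathcal H \subseteq \mathring r(\mathcal U)$, writing each $H = \Int_\tau(G_H)$, we have $\cl_\tau(\cup \mathcal H) \subseteq \cl_\tau(\cup\{G_H\}) = \cup\{\cl_\tau(G_H)\} = \cup\{G_H\}$, and one then intersects back down to the interiors using convexity to see no new accumulation points are created. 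I would verify the inclusion $\cup\{\cl_\tau(\Int_\tau(G_H))\} \supseteq \cl_\tau(\cup\{\Int_\tau(G_H)\})$ directly.

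The main obstacle I anticipate is precisely this last step: a closed convex set $[A]$ in a GO-space need not equal the closure of its interior (for instance when $[A]$ degenerates or its endpoints sit at $E_\tau$-type points), so the identity $\cl_\tau(\Int_\tau(G)) = G$ can fail, and the interiors might accumulate at a boundary point that the original closed sets did not "see" collectively. The resolution should be that any such failure point is forced into $E_\tau$ and, more importantly, that closure-preservation is about accumulation of \emph{unions}, so I expect to reduce a putative failure of closure-preservation for $\mathring r(\mathcal U)$ to a failure for $\bar r(\mathcal U)$ by a convexity and endpoint-$\inf/\sup$ argument identical in spirit to the one used in the proof of Lemma~\ref{GOopenenough} (where a violating point $p$ satisfies $p = \inf\{l_A\}$ or $p = \sup\{u_A\}$). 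The last sentence of the theorem is then immediate: if $E_\tau = \emptyset$, the containment $X \setminus \cup\mathring r(\mathcal U) \subseteq E_\tau$ forces $\mathring r(\mathcal U)$ to be a cover, so $\mathring r$ is a genuine convex open closure-preserving operator.
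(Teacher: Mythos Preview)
Your construction $\mathring r(\mathcal U)=\{\Int_\tau[A]:A\in r(c(\mathcal U))\}$ and your coverage argument via the cases $I_\tau$, $R_\tau$, $L_\tau$ (using parts (a) and (b) of Lemma~\ref{GOopenenough}) match the paper's proof exactly. For closure-preservation the paper simply says the argument is ``similar to the proof for $\bar r(\mathcal U)$ and is left to the reader,'' which is precisely the convexity/endpoint $\inf$--$\sup$ reduction you settle on after (correctly) abandoning the $\cl_\tau(\Int_\tau G)=G$ route.
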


\begin{proof}
Let $c$ and $\bar r$ be the operators described in
the proof of Lemma \ref{GOopenenough}.
Let $\mathring r(\mathcal U)=
\{\Int[A]: A \in r(c(\mathcal U))\} =
\{\Int(K): K \in \bar r(\mathcal U)\}$.
Clearly $\mathring r(\mathcal U)$
is an open family refining $\bar r(\mathcal U)$
(and hence also $\mathcal U$)
and $\mathring r$ is monotone.
The proof that $\mathring r(\mathcal U)$
is a closure-preserving family is similar to the
proof for $\bar r(\mathcal U)$ and is left to the reader.

We show that $\cup\mathring r(\mathcal U)\supseteq X\setminus E_\tau$.
If $x\in I_\tau$, then $x\in\Int[A]\in\mathring r(\mathcal U)$
whenever $x\in A \in r(c(\mathcal U))$.
If $x\in R_\tau$ then by Lemma \ref{GOopenenough}
($a$), there is some $G\in \bar r(\mathcal U)$ with
$x\in \Int(G)$. The case $x\in L_\tau$ is similar,
which completes the proof.
\end{proof}

In the special case when $E_\tau$ is finite, Theorem \ref{GOopen} allows us to remove the requirement in
Theorem~\ref{compLOTSopen} that the operator $r$ is open.

\begin{theorem}\label{compLOTSfinite}
Suppose that $X$ is a compact LOTS with a
monotone closure-preserving operator $r$.
If $E_\tau$ is finite, then $X$ is metrizable.
\end{theorem}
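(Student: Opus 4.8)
The plan is to manufacture from $r$ a genuine monotone open closure-preserving operator on all of $X$ and then invoke Theorem~\ref{compLOTSopen}, which already gives metrizability of a compact LOTS carrying such an operator. First I would apply Theorem~\ref{GOopen} to obtain the convex open monotone operator $\mathring r$ with each $\mathring r(\mathcal U)$ closure-preserving and refining $\mathcal U$, and with $X\setminus\bigcup\mathring r(\mathcal U)\subseteq E_\tau$ for every open cover $\mathcal U$. Writing $E_\tau=\{e_1,\dots,e_k\}$, only these finitely many two-sided limit points can fail to be covered, so the whole problem collapses to covering them by finitely many open sets chosen monotonically in $\mathcal U$; the sought operator will then be $\mathring r(\mathcal U)\cup\{N_e(\mathcal U):e\in E_\tau\}$ for suitable neighborhoods $N_e(\mathcal U)\ni e$.

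A pleasant point that makes finiteness of $E_\tau$ do real work is that closure-preservation is automatically retained: if $\mathcal F$ is closure-preserving and one adjoins finitely many sets $F_1,\dots,F_k$, then any subfamily splits as a subfamily of $\mathcal F$ together with a finite remainder, and since closure distributes over finite unions the adjoined family is again closure-preserving. Thus I need not worry about closure-preservation at all, and the entire difficulty concentrates in assigning, to each cover $\mathcal U$ and each $e\in E_\tau$, an open neighborhood $N_e(\mathcal U)$ of $e$ that (i) lies in a single member of $\mathcal U$, so the enlarged family still refines $\mathcal U$, and (ii) varies monotonically, so that $\mathcal U$ refining $\mathcal V$ yields $N_e(\mathcal U)$ inside some member of $\mathring r(\mathcal V)\cup\{N_{e'}(\mathcal V)\}$ (for which $N_e(\mathcal U)\subseteq N_e(\mathcal V)$ is the natural target). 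To build $N_e$ I would lean on Lemma~\ref{GOopenenough}: since $e\in E_\tau$, parts (a) and (b) furnish $G,H\in\bar r(\mathcal U)$ with $[e,g_e)\subseteq G$ and $(h_e,e]\subseteq H$, whence $\Int G\supseteq(e,g_e)$, $\Int H\supseteq(h_e,e)$, and $\Int H\cup\{e\}\cup\Int G$ is an open convex neighborhood of $e$.

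The hard part will be exactly condition (i): a two-sided neighborhood of a two-sided limit point is forced to straddle the boundary between the operator's left choice $H$ and right choice $G$, and these may lie in \emph{different} members of $\mathcal U$, so $\Int H\cup\{e\}\cup\Int G$ need not refine $\mathcal U$. Since the non-open operator $r$ can always push $e$ to the edge of whatever convex piece it selects, no single application yields a two-sided neighborhood inside one cover element, and the naive monotone remedies (taking the convex component of $\bigcup\{U\in\mathcal U:e\in U\}$, or a union of order-intervals around $e$ that fit inside single elements) are monotone but again span several elements. I expect the resolution to use finiteness once more: fix, independently of $\mathcal U$, pairwise disjoint convex frames $(p_i,q_i)\ni e_i$ meeting no other $e_j$, and then obtain a \emph{monotone selection of one cover element that is a two-sided neighborhood of $e_i$} by feeding $r$ an auxiliary cover adapted to the frame (so that the piece it returns at $e_i$ is genuinely two-sided inside a single member), and setting $N_{e_i}(\mathcal U)=(\Int H\cup\{e_i\}\cup\Int G)\cap W_i(\mathcal U)$ with $W_i(\mathcal U)$ that selected element. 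Verifying that this $N_{e_i}$ simultaneously contains $e_i$, is open, refines $\mathcal U$, and is monotone is the delicate technical heart of the argument; once it is in hand, $\mathring r(\mathcal U)\cup\{N_e(\mathcal U):e\in E_\tau\}$ is a monotone open closure-preserving operator on $X$, and Theorem~\ref{compLOTSopen} finishes the proof.
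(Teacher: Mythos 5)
Your overall skeleton is the right one --- and it matches the paper's in spirit: use Theorem~\ref{GOopen} to cover $X\setminus E_\tau$, observe that adjoining finitely many sets cannot destroy closure-preservation, and reduce everything to a monotone assignment, for each $e\in E_\tau$, of an open neighborhood of $e$ lying in a single member of $\mathcal U$. But that last assignment is exactly where your proposal has a genuine gap, and the mechanism you sketch for it cannot be made to work. You propose to get a ``two-sided piece at $e_i$ inside one cover element'' by feeding $r$ an auxiliary cover adapted to a fixed convex frame around $e_i$. This runs into the very obstruction you yourself identified two sentences earlier: $r$ is not an open operator, so no matter which cover you feed it, the piece of the refinement containing $e_i$ may have $e_i$ on its boundary (e.g.\ be of the form $[e_i,q)$ or $(p,e_i]$); nothing forces it to be a neighborhood of $e_i$, let alone a two-sided one. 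Moreover, $W_i(\mathcal U)$ is supposed to be a \emph{member of $\mathcal U$} chosen monotonically in $\mathcal U$, and a monotone selection of a single cover element is not available in general --- when $\mathcal U$ refines $\mathcal V$ there is no canonical way to make the chosen element of $\mathcal U$ sit inside the chosen element of $\mathcal V$. So the ``delicate technical heart'' you defer is not a verification; it is the missing construction.

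The paper fills this hole with an ingredient absent from your proposal: Theorem~\ref{1stcountable}, which says a compact LOTS with a monotone closure-preserving operator is \emph{first countable}. One then fixes, once and for all (independently of $\mathcal U$), a $\subseteq$-decreasing open local base $\{B_n(e):n\in\omega\}$ at each $e\in E_\tau$, and defines $N_e(\mathcal U)=B_n(e)$ where $n$ is least such that $B_n(e)$ is contained in some member of $\mathcal U$ (such $n$ exists because some member of $\mathcal U$ contains $e$). Condition (i) holds by definition, and monotonicity is automatic from nestedness: if $\mathcal U$ refines $\mathcal V$, the least index for $\mathcal V$ is at most that for $\mathcal U$, so $N_e(\mathcal U)\subseteq N_e(\mathcal V)$. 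Replacing your $N_{e_i}$ by this fixed-base selection repairs the argument; the paper additionally passes to the $\subseteq$-maximal elements $\hat r(\mathcal U)$ of $\mathring r(\mathcal U)$ (as in Lemma~\ref{finite}) so that the resulting refinements are finite, concluding that $X$ is monotonically compact and hence metrizable, rather than routing through Theorem~\ref{compLOTSopen}; but that difference is cosmetic, since Theorem~\ref{compLOTSopen} itself is proved by reduction to monotone compactness.
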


\begin{proof}
Let $\mathring r(\mathcal U)$ be as described
in the proof of the preceding theorem,
and let $\hat r(\mathcal U)$ be the family of
$\subseteq$-maximal elements of $\mathring r(\mathcal U)$.
It is easily seen
(using the ideas in the proof of Lemma~\ref{finite})
that $\cup\hat r(\mathcal U)= \cup\mathring r(\mathcal U)$
and that $\hat r(\mathcal U)$ is finite,
for any open cover $\mathcal U$.
If $E_\tau=\emptyset$ then we are done as $\hat r$ shows
that $X$ is monotonically compact.

If $E_\tau\not=\emptyset$ then
(using Theorem~\ref{1stcountable})
for each $x\in E_\tau$ fix a $\subseteq$-decreasing
local base $\mathcal B_x=\{B_n(x):n\in\omega\}$
(i.e. $B_{n+1}(x)\subset B_n(x)$ for all $n$).
Given any open cover
$\mathcal U$ let $V_x(\mathcal U)$ be the
$\subseteq$-maximal element of $\mathcal B_x$
that is contained in some open set $U\in\mathcal U$
(i.e. $V_x(\mathcal U)=B_n(x)$ where
$n$ is smallest such that there is
$U\in\mathcal U$ with $B_n(x)\subseteq U$).
Let $\mathcal F(\mathcal U)=
\{V_x(\mathcal U):x\in E_\tau\}$.
Since $E_\tau$ is finite, the operator
$\tilde r(\mathcal U)=\hat r(\mathcal U)\cup
\mathcal F(\mathcal U)$
shows that $X$ is monotonically compact,
and hence metrizable.
\end{proof}

\begin{co}\label{arrow}
The Alexandroff double arrow is a compact first-countable, hereditarily Lindel\"of LOTS that has no monotone closure-preserving operator.
\end{co}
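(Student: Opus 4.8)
The plan is to exhibit the Alexandroff double arrow as a compact LOTS satisfying the hypotheses of Theorem~\ref{compLOTSfinite} and then to derive a contradiction from its non-metrizability. Recall that the double arrow $X$ is $[0,1]\times\{0,1\}$ (or this set with the two endpoints $(0,0),(1,1)$ deleted, if one wishes to avoid isolated points) carrying the order topology of the lexicographic order; that it is a compact, first-countable, separable, hereditarily Lindel\"of LOTS are all classical facts, so the only genuine claims to verify are that $E_\tau$ is finite and that $X$ is not metrizable.

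First I would compute $E_\tau$ directly from the order. For every $x$ the point $(x,0)$ has $(x,1)$ as its immediate successor, so $(\from,(x,0)]=(\from,(x,1))$ is open and hence $(x,0)\in I_\tau\cup L_\tau$; dually $(x,1)$ has $(x,0)$ as its immediate predecessor, so $[(x,1),\to)=((x,0),\to)$ is open and $(x,1)\in I_\tau\cup R_\tau$. Thus every point of $X$ lies in $I_\tau\cup R_\tau\cup L_\tau$, so $E_\tau=\0$, which is in particular finite. Next I would record non-metrizability: since $X$ is separable, a metric on it would be separable, hence second countable; but $X$ is not second countable, because for the uncountably many points $(x,1)$ the open sets $[(x,1),\to)$ have pairwise distinct minima $(x,1)$, so no countable family of open sets can contain, for each such point, a member sitting inside $[(x,1),\to)$. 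Hence $X$ is not metrizable.

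With these two facts the conclusion is immediate: if $X$ possessed a monotone closure-preserving operator, then, being a compact LOTS with $E_\tau=\0$ finite, $X$ would be metrizable by Theorem~\ref{compLOTSfinite}, contradicting the previous paragraph. (Equivalently, as $X$ is separable its countable dense set is a union of singletons and hence $\sigma$-closed-discrete, so Theorem~\ref{gometric} applies just as well and yields the same contradiction.) I expect no real obstacle here: the entire content is bookkeeping about the double arrow, and the only steps that warrant any care are the verification $E_\tau=\0$ and the second-countability argument, both of which are routine once the immediate-successor/immediate-predecessor structure of the lexicographic order is made explicit.
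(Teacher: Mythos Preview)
Your proposal is correct and matches the paper's intended argument: the corollary is stated immediately after Theorem~\ref{compLOTSfinite} with no proof given, and you have filled in exactly the two routine facts the paper takes for granted, namely $E_\tau=\emptyset$ (from the immediate successor/predecessor structure) and non-metrizability. Your parenthetical observation that Theorem~\ref{gometric} would also do the job is a legitimate alternative the paper does not mention.
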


\end{document}